\theoremstyle{plain} %default
\newtheorem{lem}{Lemma}
\theoremstyle{definition}
\theoremstyle{remark}
\DeclareMathOperator{\ad}{ad} \DeclareMathOperator{\Ad}{Ad}
 \DeclareMathOperator{\plg}{Pr}
\DeclareMathSymbol{\R}{\mathalpha}{AMSb}{"52}
\DeclareMathSymbol{\C}{\mathalpha}{AMSb}{"43}
\newcommand{\beq}{\begin{equation}}
\newcommand{\eeq}{\end{equation}}
\newcommand{\set}[1]{\left\{#1\right\}}
\newcommand{\seq}[1]{\left<#1\right>}
\newcommand{\pd}{\partial}
\newcommand{\ra}{\rightarrow}
\newcommand{\bd}{\begin{description}}
\newcommand{\ed}{\end{description}}
\newcommand{\beqr}{\begin{eqnarray}}
\newcommand{\eeqr}{\end{eqnarray}}
\begin{document}

\title[Symmetry properties of an acoustic**(s) model] % acoustic, not acoustics
{Symmetry properties of a nonlinear acoustics
model}
\author[J C Ndogmo]{ J C  Ndogmo}

\address{
P O Box 2446\\
Bellville 7535\\
South Africa.}
\email{jndogmo@uwc.ac.za}

\begin{abstract}
We give a classification into conjugacy classes of subalgebras of
the symmetry algebra generated by the Zabolotskaya-Khokhlov
equation, and obtain all similarity reductions of this equation into
$(1+1)$-dimensional equations. We thus show that Lie classical
reduction approach may also give rise to more general reduced
equations as those expected from the direct method of Clarkson and
Kruskal. By transforming the determining system for the similarity
variables into the equivalent adjoint system of total differential
equations, similarity reductions to {\sc ode}s which are independent
of the three arbitrary functions defining the symmetries are also
obtained. These results are again compared with those obtained by
the direct method of Clarkson and Kruskal, by finding in particular
equivalence transformations mapping some of the reduced equations to
each other. Various families of new exact solutions are also
derived.
\end{abstract}

\keywords{Lie algebra classification, Comparison of reduction
techniques, Equivalence transformations, Exact solutions}

\subjclass[2000]{ 70G65, 83C15, 34C20}
%
%
%%%% Subject Classification number
\maketitle

%\eject
\section{Introduction}

The Zabolotskaya-Khokhlov (ZK) equation is a nonlinear model of
sound wave propagation derived from the incompressible Navier-Stokes
equation \cite{pavp29, kostin2, kostin3}. The $(2+1)$- dimensional
version of this equation  has the form

\beq \label{eq:zk}\Delta (t,x,y,u) \equiv u_{xt}- (u u_x)_x - u_{y
y}= 0, \eeq

and it has been studied from the Lie group approach in a number of
papers \cite{chowdhu, schw, hereman, zhu}. Chowdhury and Naser
\cite{chowdhu} attempted the determination of the symmetry algebra
of this equation and calculated some of their conservation laws.
However, Schwarz \cite{schw} and Hereman \cite{hereman} were
certainly the first to provide independently the correct generators
of the Lie symmetry algebra for this popular acoustics model.
Although the calculation of symmetry algebras for almost all systems
of differential equations has long been reduced to a mere function
on a number of modern computing systems, many symmetry properties of
this equation are still to be uncovered. \par

In \cite{zhu}, some similarity reductions of Equation ~\eqref{eq:zk}
to $(1+1)$-dimensional models were obtained, based on the direct
method of Clarkson and Kruskal  \cite{clarkson}. More specifically,
it was shown that if a similarity solution of Equation
~\eqref{eq:zk} of the form
\beq \label{eq:pdevars} u= U(t,x,y, W(\xi, \eta)), \qquad
\xi=\xi(t,x,y), \quad\ \eta =\eta(t,x,y) \eeq
can be found by solving a reduced $(1+1)$-dimensional equation,
then, when $\xi_x \neq 0,$ such an equation must be one of three
non-equivalent $(1+1)$-dimensional equations found in that paper.
However, these reduced equations are either determined only
implicitly in terms of solutions of certain partial differential
equations ({\sc pde}s), or they depend on up to four arbitrary
functions, and the same can be said about their solutions. In the
same paper, using again the direct method of Clarkson and Kruskal
and a restrictive anzatz, the most general {\sc ode} that every
similarity solution of ~\eqref{eq:zk} obtained by solving an {\sc
ode} must satisfy is shown to be of the form

\beq \label{eq:zhu} w'^2 + w w'' + (A z + B) w' + 2 A =\frac{1}{3}
(A z + B)^2 . \eeq
%%%%%%%%%%%%%%%%%%%%%%%%%%
%%%%%%%%%%%%%%%%%%%%%%%%
Some of the unanswered questions raised  in \cite{zhu} were how to
find all the non equivalent similarity reductions of the ZK equation
to an {\sc ode} by the classical Lie symmetry approach, and whether
there is any connection between these two types of reduction
techniques. The first of these two questions stems from the fact
that the symmetries of Equation ~\eqref{eq:zk} depend on three
arbitrary functions of time, and so its similarity reductions are
usually achieved by restricting these functions to some specific
types of elementary functions, such as exponential or simple
polynomial functions of time \cite{chowdhu, schw}.  It also stems
from the fact that no classification of low-dimensionial subalgebras
of the ZK symmetry algebra into conjugacy classes is available.\par

   In this paper, we obtain all canonical forms of non-equivalent
one- and two-dimensional subalgebras of the symmetry algebra $L$ of
~\eqref{eq:zk}, under the adjoint representation of the symmetry
group. We then apply them to obtain all similarity reductions of the
ZK equation to $(1+1)$-dimensional equations, using the classical
Lie approach. The same reductions are also obtained by direct case
analysis. We thus show that in addition to its simpler algorithm and
other properties, Lie classical reduction method gives rise not only
to simpler equations, but  it may also yield more unified and
general reduced equations than those expected from the direct method
(of Clarkson Kruskal). Next, by transforming the determining system
for the similarity variables into the equivalent adjoint system of
total differential equations, similarity reductions to large classes
of {\sc ode}s which are independent of the three arbitrary functions
defining the symmetries are also obtained. The latter system of
total differential equations allows for an easier determination of
the invariant functions defining the similarity coordinates.  Large
classes of similarity solutions depending on much less arbitrary
functions than those obtained in \cite{zhu} are also derived in this
way.
\par
   Finally, we find equivalence
transformations mapping some of the equations that we've obtained by
the Lie classical method to some sub-equations of the reduced
equation ~\eqref{eq:zhu} obtained by the direct method of Clarkson
and Kruskal. Our discussions also show that in principle any reduced
equation achievable with the direct method can also be achieved by
Lie classical method although the converse is totally out of
question, as far as the properties of the reduced equations are
concerned.\par

This paper is organized as follows. In the next section we discuss
the symmetry algebra of the ZK equation and determine its algebraic
structure as well as its connection with Kac-Moody-Virasoro ({\sc
kmv}) algebras.  Section ~\ref{s:classification} is devoted to the
classification of low-dimensional subalgebras of $L$ and Section
~\ref{s:reduction} to the similarity reductions of the ZK equation.
We investigate the connection between the two types of reductions
invoked above in Section ~\ref{s:comparison} . Some concluding
remarks are given in the last section.

\section{Symmetry group of the ZK equation}
\label{s:symg}
\subsection{Structure of the symmetry algebra}
\label{sb:structure} The Lie algebra of the ZK equation is well
known \cite{schw, hereman}. This is the Lie algebra defined by the
infinitesimal generators of the point symmetry group $G$ of the
equation, that is, the Lie group of point transformations that map
every solution of the equation to another solution of the same
equation \cite{olv1, stephani}. These infinitesimal generators are
vector fields of the form
\beq \label{eq:vectf} \mathbf{v}= \xi_1 (t,x,y,u) \partial_{t}  +
\xi_2 (t,x,y,u)
\partial_{x}  + \xi_3 (t,x,y,u)
\partial_{y} + \phi (t,x,y,u) \partial_{u}
\eeq
acting on the space of independent variables coordinatized by $(t,
x, y)$  and the space of the dependent variable coordinatized by
$u,$ and such that the second prolongation $\plg^{(2)} \mathbf{v}$
of $\mathbf{v}$ satisfies

\beq \label{eq:plg}\plg^{(2)}\mathbf{v} \; \Delta (t,x,y,u)
\Big\vert_{\, \Delta (t,x,y,u) =0}=0. \eeq

 Equation ~\eqref{eq:plg} completely determines the unknown functions
 $\xi_1, \xi_2, \xi_3, \phi$ defining $\mathbf{v}$ \cite{olv1, olv2, stephani}.
 For the ZK equation, the linearly independent vector fields, i.e. the
 generators of the Lie symmetry algebra are given by the operators
\begin{subequations}\label{eq:gtors}
\begin{align}
\mathbf{v}_0 \; &= \; 2 x \partial_x + y \partial_y + 2 u \partial_u \\
\mathbf{x}_g \; &=\; g \partial_x - g' \partial_u \\
\mathbf{y}_h \; &=\;  \frac{1}{2} y h'\partial_x + h\partial_y
-\frac{1}{2} y
h'' \partial_u \\
\mathbf{z}_f \;&=\; f \partial_t + \frac{1}{6} \left(2 x f' + y^2
f''\right) \partial_x + \frac{2 y}{3}  f' \partial_y + \frac{1}{6}
\left(- 4 u f' - 2 x f'' - y^2 f'''\right) \partial_u
\end{align}
\end{subequations}
 where $f, g, h$ are arbitrary $C^{\infty}$ functions of the time
 variable $t$ defined on some open subset of $\R,$ assumed to be the
based field of $L,$ and where a prime represents a derivative with
respect to $t.$ The ZK symmetry
 algebra is therefore infinite-dimensional, and its commutation
 relations are easily found to be as follows
\begin{subequations}\label{eq:com}
\begin{alignat}{2}
[\mathbf{v}_0, \mathbf{x}_g]\;  &= \; -2 \mathbf{x}_g,\medspace &
[\mathbf{x}_g, \mathbf{y}_h]\; &= \;
0  \\
[\mathbf{v}_0, \mathbf{y}_h]\;  &= \; - \mathbf{y}_h,\medspace  &
[\mathbf{x}_g, \mathbf{z}_f]\;
&= \; \mathbf{x}_{( f' g/3 - f g')}   \\
[\mathbf{v}_0, \mathbf{z}_f]\; &= \;0, \medspace & [\mathbf{y}_h, \mathbf{z}_f]\; &= \; \mathbf{y}_{ (\frac{2}{3} f'h - f h')} \\
[\mathbf{x}_{g_1}, \mathbf{x}_{g_2}] \; &= \; 0  \medspace &  [\mathbf{y}_{h_1}, \mathbf{y}_{h_2}] \; &= \; \mathbf{x}_{ (h_1 h_2'- h_1' h_2 )/2}  \\
[\mathbf{z}_{f_1}, \mathbf{z}_{f_2}]\; &= \; \mathbf{z}_{ (f_1 f_2'
-f_1' f_2 )}. \medspace & &
\end{alignat}
\end{subequations}
These commutation relations show that the ZK symmetry algebra $L$
has the structure of a semi-direct sum Lie algebra, $L= \mathcal{R}
+ \mathcal{S},$ where $\mathcal{R} = \seq{\mathbf{v}_0,
\mathbf{x}_g, \mathbf{y}_h}$ is the radical and $\mathcal{S}=
\seq{\mathbf{z}_f}$ is the semisimple part, also called Levi factor
of $L.$ It is indeed a well-known fact \cite{pavkp} that the
infinite dimensional Lie algebra generated by $\mathbf{z}_f$  is a
simple Lie algebra, i.e. it has no nontrivial ideal. This is easily
established by first nothing that by a result of Cartan
\cite{cartan24}, the Lie algebra $J(I)= \set{ f(t) \partial_t \colon
f \in C^{\infty}(I)}$ of vector fields on the open interval $I$ of
$\R$ is a simple algebra, and that the map
$$
\sigma \colon J(I) \ra \mathcal{S} \colon f(t) \partial_t\mapsto
\mathbf{z}_{f(t)}
$$
is a Lie algebra isomorphism. We also note that the radical
$\mathcal{R},$ which is solvable by definition, is actually
nonnilpotent. Its nilradical, i.e. its maximal nilpotent ideal is
generated by $\seq{\mathbf{x}_g, \mathbf{y}_h}.$ \par The
commutation relations ~\eqref{eq:com} also show that the
infinite-dimensional subalgebra $L_0 = \seq{\mathbf{x}_g,
\mathbf{y}_h, \mathbf{z}_f}$ of $L$ can be identified with a
subalgebra of a {\sc kmv} type algebra. Indeed, by restricting the
arbitrary functions $f, g,$ and $h$ to be Laurent polynomials, the
resulting commutation relations take the form
\begin{subequations}\label{eq:kmv2}
\begin{alignat}{2}
[\mathbf{x}_{t^m}, \mathbf{x}_{t^n}] \; &= \; 0,  \qquad &
[\mathbf{x}_{t^m}, \mathbf{y}_{t^n}] \; &= \; 0  \\
[\mathbf{z}_{t^m}, \mathbf{z}_{t^n}]\; &= \; (n-m)\,
\mathbf{z}_{t^{m+n-1}},               \qquad &
[\mathbf{x}_{t^m}, \mathbf{z}_{t^n}]\; &= \; \frac{m-3n}{3}\, \mathbf{x}_{t^{m+n-1}}  \\
[\mathbf{y}_{t^m}, \mathbf{z}_{t^n}]\; &= \; \frac{2m-3n}{3}\,
\mathbf{y}_{t^m+n-1},
                   \qquad & [\mathbf{y}_{t^m},\mathbf{y}_{t^n}] \; &= \;
\frac{n-m}{2}\, \mathbf{x}_{t^{m+n-1}},
\end{alignat}
\end{subequations}
and this shows that $\seq{\mathbf{x}_g, \mathbf{y}_h}$ generates the
corresponding Kac-Moody subalgebra of $L_0,$ while
$\seq{\mathbf{z}_f}$ generates the  Virasoro subalgebra
\cite{pavpre1}. These algebraic properties of the ZK equation, which
can be linearized by a generalized hodograph transformation
\cite{gibbons}, are in agreement with a widespread belief about
integrable $(2+1)$-dimensional equations. Indeed, the symmetry
algebra of most integrable $(2+1)$-dimensional equations are
infinite-dimensional and can be identified with a subalgebra of a
{\sc kmv} type algebra \cite{pavkp, pavpre1, gungor}. There are
nonetheless some exceptions provided for instance by the breaking
soliton equation and the Zakharov-Strachan equation \cite{sentil}
which are integrable but do not admit a {\sc kmv} type symmetry
algebra. On the other hand, the symmetry algebra of every known
non-integrable equations has no {\sc kmv} structure. This suggests
that {\sc kmv} structures are in some way which is still to be
clarified  associated with integrability. By an integrable equation
here, we refer to equations  allowing a Lax pair, an infinity of
conservation laws, soliton and multisoliton solutions, a family of
analytic periodic and quasi-periodic solutions, as well as a number
of similar properties.

\subsection{Group transformations}
\label{sb:gptransfo} One of the most important properties of the
symmetry group of a differential equation is to transform one
solution of the equation into another solution of the same
differential equation. These group transformations are generated by
some one-parameter group of transformations, each of which is the
local flow $\psi (\epsilon ,\mathbf{w}),$ where
$\mathbf{w}=(t,x,y,u)$ and $\epsilon \in \R,$ of a basis element
$\mathbf{v}$ of the Lie symmetry algebra, acting on the space of
independent and dependent variables. More specifically, the local
flow $\psi (\epsilon, \mathbf{w})$ of $\mathbf{v}$ is given for
every $\epsilon$ for which it is defined by

\beq \label{eq:flow}  \partial_{\epsilon}\, \psi (\epsilon,
\mathbf{w}) \equiv \dot{\psi}(\epsilon, \mathbf{w}) = \mathbf{v}
\big \vert_{\,\psi (\epsilon, \mathbf{w})}, \quad \text{ and } \quad
\psi (0, \mathbf{w})= \mathbf{w}, \eeq

where $\partial_{\epsilon}\, =  d/d \epsilon.$ The full group action
can be obtained by combining all the one-parameter group actions
determined by each infinitesimal generator. We will use the notation
$$ \psi (\epsilon, \mathbf{w})= \tilde{\mathbf{w}} =
(\tilde{t}, \tilde{x}, \tilde{y}, \tilde{u}),$$ and we let the
generic generator of the ZK symmetry algebra have the form
\beq \label{eq:genvect}  \mathbf{v}= k_0  \mathbf{v}_0 +
\mathbf{x}_g + \mathbf{y}_h+ \mathbf{z}_f,\eeq
where $k_0$ is a scalar. We also denote by $F'$ the derivative of a
function $F=F(t)$ of the time variable $t.$ To find the flow of
$\mathbf{v}$ for the ZK equation, we
have to distinguish a number of cases.\\[1mm]

\underline{\bf Case 1} $\colon$ $g=h=f=0,$ and $k_0 \neq 0$\\
In this case $\mathbf{v}= \mathbf{v}_0,$ and by assuming without
loss of generality that $k_0=1,$ we have
\begin{align*}
\partial_{\epsilon}\, \tilde{t} &= 0  &\qquad  \partial_{\epsilon}\,
\tilde{y} &= \tilde{y} \\
\partial_{\epsilon}\, \tilde{x}&= 2 \tilde{x}  &\qquad  \partial_{\epsilon}\,
\tilde{u} &= \tilde{u}.
\end{align*}
Consequently, the corresponding group action is given by
$$
\psi (\epsilon, \mathbf{w})= (t, x e^{2 \epsilon}, y e^{ \epsilon},
u e^{2 \epsilon}) = \tilde{\mathbf{w}}.
$$
It then follows that
$$
\tilde{u}(t,x,y) = e^{2 \epsilon} u(t, x e^{- 2 \epsilon}, y e^{-
\epsilon} )
$$
 is a solution of the ZK equation whenever $u(t,x,y)$ is a
 solution.\\[1mm]

\underline{\bf Case 2}$\colon \quad f=h=0, \quad g \neq 0, \;k_0$ is
arbitrary \\
We have in this case $\mathbf{v}= k_0 \mathbf{v}_0 + \mathbf{x}_g.$\\[1mm]

$\qquad$ \underline{Case 2a} $\colon \quad k_0=0$\\
We find that\\
$$ \tilde{\mathbf{w}} = (t,\, x+ \epsilon g, \, y, \, u- \epsilon g'). $$
Consequently,
$$
\tilde{u} (t,x,y)= u(t, x- \epsilon g, y) - \epsilon g'
$$
is a solution whenever $u(t,x,y)$ is.\\[1mm]

$\qquad$\underline{Case 2b} $\colon \quad k_0 \neq 0$\\
In this case, $\tilde{\mathbf{w}}$ is given by \\[-3mm]
\begin{align*}
\tilde{t} &= t  \\
\tilde{x} &= \frac{1}{2k_0} (e^{2 k_0  \epsilon} (2 k_0 x +g) - g)  \\
\tilde{y} &=   y e^{k_0 \epsilon}\\
\tilde{u} &= \frac{1}{2k_0}(e^{ 2 k_0 \epsilon} ( 2 k_0 u - g')+
g').
\end{align*}
 Therefore, if $u(t,x,y)$ is a solution, then
$$
\tilde{u} (t,x,y) = \frac{1}{2k_0} \left[ e^{2 k_0 \epsilon} (2 k_0
u^* -g') + g' \right]
$$
is also a solution, where
$$
u^*= u\left[t, \frac{1}{2k_0} \left( (2 k_0 x + g)e^{-2 k_0
\epsilon} - g\right), y e^{- k_0 \epsilon}\right].
$$
\underline{\bf Case 3} $\colon \quad f=0, h \neq 0$ and $g$ and
$k_0$
are arbitrary\\
 We have two subcases in this case.\\[1mm]

$\qquad$ \underline{Case 3a} $\colon \quad k_0=0$\\
We have
$$
\tilde{\mathbf{w}}= (t, x+ g \epsilon+ \epsilon(y + h \epsilon)h', y
+ 2 h \epsilon, u - \epsilon (g' +  (y+ h \epsilon) h'') ).
$$

Consequently, whenever $u(t, x, y)$ is a solution, so is
$$
\tilde{u} (t,x,y) = u^*- \epsilon (g' + y (- h \epsilon)h'')
$$
where
$$
u^*= u (t, x-g \epsilon + \epsilon(- y + h \epsilon)h', y- 2 h
\epsilon).
$$

$\qquad$ \underline{Case 3b} $\colon \quad k_0 \neq 0$\\

We find that  $\tilde{\mathbf{w}}$ is given by
\begin{align*}
\tilde{t} &= t  \\
\tilde{x} &= \frac{1}{2 k_0^2} \left[  - k_0 g  + 2 h h'' + e^{k_0
\epsilon}
            (- 4 h h' - 2 k_0 y h')
+ e^{2 k_0 \epsilon} (k_0 g + 2 k_0^2 x + 2 h h' + 2 k_0 y h') \right]  \\
\tilde{y} &=  \frac{1}{k_0} \left[  - 2 h + e^{k_0 \epsilon} (2 h + k_0 y)\right] \\
\tilde{u} &= \frac{-1}{ 2 k_0^2} \left[ - k_0 g' + 2 h h'' + e^{k_0
\epsilon} (- 4 h h'' - 2 k_0 y h'') + e^{2 k_0 \epsilon} (- 2 k_0^2
u + k_0 g' + 2 h h'' + 2 k_0 y h'') \right].
\end{align*}

This shows that whenever $u(t,x,y)$ is a solution, so is
$$
\tilde{u} ( \tilde{t}, \tilde{x}, \tilde{y})= \frac{1}{2
k_0^2}\left[ 2 k_0^2 e^{2 k_0 \epsilon} u (t,x,y) - k_0 (-1+ e^ {2
k_0 \epsilon}) g'+ 2 (e^{k_0 \epsilon} -1) ( (-1+ e^{k_0 \epsilon})h
- k_0 \tilde{y} ) h'' \right]
$$
where
\begin{align*}
t &= \tilde{t} \\
x &= \frac{1}{2 k_0^2} \left[ e^{- 2 k_0 \epsilon} (k_0 (g- e^{2 k_0
\epsilon})g + 2 k_0  \tilde{x}) + 2 h'(-1+ e^{k_0 \epsilon})((e^{k_0
\epsilon} -1)h - k_0 \tilde{y}) \right]  \\
y &=  -\frac{1}{k_0} \left[   e^{-k_0 \epsilon} (- 2h + 2 e^{k_0
\epsilon} h -
         k_0 \tilde{y} ) \right]. \\
\end{align*}

Finally, we are left with the case where $f \neq 0.$ This case leads
to relatively complicated or too long formulas, and we omit it here.

\section{Classification of low-dimensional Symmetry
algebras}\label{s:classification} Group-invariant solutions
corresponding to two subgroups which are conjugate under the adjoint
action of the symmetry group lie on the same orbit. Therefore, in
order to know all the similarity solutions of the ZK equation
invariant under $s$-parameter symmetry subgroups, it is sufficient
to have a classification of $s$-parameter subalgebras, and an
expression for the adjoint representation $\Ad$ of $G.$ In the case
of the ZK equation, as explained in the next section, we shall only
need a classification of $s$-parameter subalgebras under the
corresponding adjoint action $\ad$ of the Lie algebra $L$ of $G$ on
itself, where $1\leq s <3,$ and where $\ad$ denotes the differential
of the $\Ad.$ By the connectedness of $G,$ in order to find an
expression for $\Ad,$ we only need to find it for the flow $\Ad(\exp
(\varepsilon V_i))\cdot \mathbf{v}$ through $\mathbf{v}$ of the
one-parameter subgroup $\exp(\varepsilon V_i)$ generated by each
basis element $V_i$ of $L.$ However, if we denote by $V_{\ad}$ the
infinitesimal generator of $\Ad$ corresponding to each $V \in L,$
then $V_{\ad}$ coincides with the Lie bracket on $L\colon\; V_{\ad}
\cdot \mathbf{v}= [\mathbf{v},V]$ for all $\mathbf{v} \in L.$ We can
therefore reconstruct the adjoint representation  of the group from
that of its Lie algebra, and formulas for $\Ad(\exp (\varepsilon
V))\cdot \mathbf{v}$ based on Lie series or on properties of the
flow exist \cite[Page 205]{olv1}.\par

For the purpose of finding a representative list of all similarity
solutions of the ZK equation, we shall only need as indicated  to
classify one- and two-dimensional subalgebras. Techniques for
performing these classifications can be found in
\cite{ovsy1,pav-crm93, olv1}. They generally consist in mapping or
'reducing' a system of Lie subalgebra generators to an equivalent
system until a canonical representative is achieved. For the ZK
symmetry algebra, we shall need the following set of lemmas which
are essentially based of the commutation relations
~\eqref{eq:com}.\par

We shall often denote collectively by $\mathbf{w}_q,$ where $q$ is
some function of time,  all of the three families of generators
$\mathbf{x}_g, \mathbf{y}_h$ and $\mathbf{z}_f$ in
~\eqref{eq:gtors}. It is clear that $\mathbf{w}_q$ is a linear
function of its argument $q.$  When $q$ is a constant function
represented by its numeric value $q$, $\mathbf{w}_q$ will be denoted
by $\mathbf{w}_{(q)}.$  We shall also often represent a general
vector of the form ~\eqref{eq:genvect} simply by its components in
terms of $k_0,$ $\mathbf{x}_g$, $\mathbf{y}_h$ and $\mathbf{z}_f.$
\begin{equation}\label{eq:genv}
\mathbf{v} =  k_0 \mathbf{v}_0 + \mathbf{x}_g + \mathbf{y}_h +
\mathbf{z}_f  \equiv \set{k_0, g, h, f}
\end{equation}

\begin{lem} \label{le:rule1}
Let $f, g$ and $h$ be given functions of $t,$  and $k_0$ a given
constant. Denote by $\alpha$ and $\beta$ some numbers and by $G$ and
$H$ some functions of $t.$
\begin{enumerate}
\item[(a)]  For some $\alpha$ and $G,$ $\mathbf{v}_0 + \mathbf{x}_g $ is mapped to $\mathbf{v}_0$ under $\Ad(\exp(\alpha\,\mathbf{x}_G)).$
\item[(b)]  For some $\alpha, \beta, G$ and $H$, $\mathbf{v}_0 + \mathbf{x}_g + \mathbf{y}_h$ is mapped to
            $\mathbf{v}_0$ under\\ $\Ad(\exp(\alpha\,\mathbf{x}_G)) \Ad(\exp(\beta\, \mathbf{y}_H)).$
\item[(c)]  For some $\alpha, \beta, G$ and $H$,
            $k_0 \mathbf{v}_0 + \mathbf{x}_g + \mathbf{y}_h + \mathbf{z}_f$ is mapped to $k_0 \mathbf{v}_0 + \mathbf{z}_f$
            under $\Ad(\exp(\alpha\,\mathbf{x}_G)) \Ad(\exp(\beta\, \mathbf{y}_H)),$ provided that $k_0^2 + f^2 \neq 0.$
\item[(d)]  For some $ \beta$ and $H$, $ \mathbf{x}_g + \mathbf{y}_h$ is mapped to
            either $\mathbf{x}_g$ or $\mathbf{y}_h$ under  $\Ad(\exp(\beta\, \mathbf{y}_H))$
\item[(e)]  For some $\alpha$ and $G,$ $\mathbf{v}_0 + \mathbf{x}_g + \mathbf{y}_h$ is mapped to $\mathbf{v}_0+ \mathbf{y}_h$ under
            $\Ad(\exp(\alpha\,\mathbf{x}_G)).$
\item[(f)]For some $\alpha$ and $G,$ $k_0 \mathbf{v}_0 + \mathbf{x}_g + \mathbf{y}_h + \mathbf{z}_f$ is mapped to $k_0 \mathbf{v}_0+ \mathbf{y}_h + \mathbf{z}_f$
            under $\Ad(\exp(\alpha\, \mathbf{x}_G)),$ provided that $k_0^2 + f^2 \neq 0.$
\end{enumerate}
\end{lem}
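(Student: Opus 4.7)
The plan is to prove all six claims by direct computation with the Lie exponential
\[
\Ad(\exp(\alpha V))\mathbf{v}=\sum_{n\geq 0}\frac{(-\alpha)^n}{n!}\ad(V)^n\mathbf{v},
\]
exploiting the fact that the commutation relations \eqref{eq:com} make this series truncate after very few terms. Specifically, $\ad(\mathbf{x}_G)$ sends each of $\mathbf{v}_0,\mathbf{y}_h,\mathbf{z}_f$ into the abelian family $\langle\mathbf{x}_q\rangle$ and annihilates every $\mathbf{x}_q$, so $\ad(\mathbf{x}_G)^2=0$ on every generator; similarly $\ad(\mathbf{y}_H)^3=0$, since a first bracket produces a $\mathbf{y}$- or $\mathbf{x}$-term and a second at worst an $\mathbf{x}$-term. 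Each adjoint action therefore reduces to an explicit polynomial in the parameters with coefficients built from $f,g,h,G,H$ and their $t$-derivatives, and I only have to solve for those parameters term by term.

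Parts (a) and (e) are immediate: $[\mathbf{x}_G,\mathbf{v}_0]=2\mathbf{x}_G$ is the only nonzero bracket contributing, so the image is $\mathbf{v}_0+\mathbf{x}_{g-2\alpha G}(+\mathbf{y}_h)$ and one picks $\alpha G=g/2$ pointwise. Part (b) uses $\Ad(\exp(\beta\mathbf{y}_H))$ with $\beta H=h$ to kill $\mathbf{y}_h$ — this only perturbs the $\mathbf{x}$-coefficient through $[\mathbf{y}_H,\mathbf{y}_h]=\mathbf{x}_{(Hh'-H'h)/2}$ — and then applies (a) to the residual $\mathbf{v}_0+\mathbf{x}_{\tilde g}$. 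Part (d) uses the same commutator: if $h\equiv 0$ one is already at $\mathbf{x}_g$, and otherwise the requirement $Hh'-H'h=2g/\beta$ rearranges into $(H/h)'=-2g/(\beta h^{2})$, solved by a single quadrature, which produces the canonical form $\mathbf{y}_h$.

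The substantive step is (c), with (f) as a one-stage variant. Applying $\Ad(\exp(\beta\mathbf{y}_H))$ to $k_0\mathbf{v}_0+\mathbf{x}_g+\mathbf{y}_h+\mathbf{z}_f$ contributes $-\beta\bigl(k_0\mathbf{y}_H+\mathbf{y}_{2f'H/3-fH'}\bigr)$ in the $\mathbf{y}$-slot together with a second-order $\mathbf{x}$-contribution coming from $\ad(\mathbf{y}_H)^{2}\mathbf{z}_f$; the $\mathbf{y}$-coefficient of the transformed element is thus $\mathbf{y}_{h-\beta(k_0H+2f'H/3-fH')}$, which I annihilate by solving the linear ODE
\[
-fH'+\bigl(k_0+\tfrac{2}{3}f'\bigr)H=h/\beta.
\]
The hypothesis $k_0^{2}+f^{2}\neq 0$ is precisely what is needed here: if $f\not\equiv 0$ the integrating-factor method yields $H$ on any interval where $f$ is nonzero, while if $f\equiv 0$ the ODE degenerates to the algebraic equation $k_0H=h/\beta$ with $k_0\neq 0$. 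The main obstacle is the careful bookkeeping of the second-order $\mathbf{x}$-term, since the subsequent application of $\Ad(\exp(\alpha\mathbf{x}_G))$ must be tuned to a modified $\mathbf{x}$-coefficient $\tilde g$; that step solves the analogous ODE $-fG'+(2k_0+f'/3)G=\tilde g/\alpha$ under the same hypothesis and finishes (c), while (f) is the direct specialization in which the $\mathbf{y}$-step is unnecessary.
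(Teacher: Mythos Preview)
Your proof is correct and follows essentially the same route as the paper: compute the adjoint action via the truncated Lie series, read off the $\mathbf{x}$- and $\mathbf{y}$-components, and eliminate them by choosing $G,H$ either algebraically (parts (a), (b), (e)) or as solutions of first-order linear ODEs in $t$ (parts (c), (d), (f)), with the hypothesis $k_0^2+f^2\neq 0$ guaranteeing solvability. Your organization is slightly tidier in that you write a single ODE $-fH'+(k_0+\tfrac23 f')H=h/\beta$ covering all nonzero-$(k_0,f)$ cases at once, whereas the paper splits into the subcases $k_0=0,\,f\neq 0$; $f=0$; and both nonzero, but the substance is identical.
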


\begin{proof}
We have $\Ad(\exp(\alpha\, \mathbf{x}_G)) (\mathbf{v}_0)=
\mathbf{v}_0 - 2 \alpha \, \mathbf{x}_G,$  and $\Ad(\exp(\alpha
\mathbf{x}_G)) (\mathbf{v}_0+ \mathbf{x}_g)= \mathbf{v}_0 - 2 \alpha
\mathbf{x}_G + \mathbf{x}_g,$ which has component $\set{1, g- 2
\alpha \,G, 0,0}$. It suffices then to choose $G= g/(2 \alpha)$ with
$\alpha \neq 0,$  which proves part (a). For part (b), we noticed
that when $h$ is zero, the statement reduces to that of part (a).
Otherwise, $V=\mathbf{v}_0 + \mathbf{x}_g + \mathbf{y}_h$ is
transformed under $\Ad(\exp(\beta\, \mathbf{y}_H))$ to a vector with
component $\set{1, g + (\beta /2)(-H h' + h H'), h - \beta H, 0}.$
If we therefore choose $H= h/ \beta$  to cancel the term
$\mathbf{y}_h$ in $V,$ the result will follow from that of part (a).
For (c), we note that if $f=0,$ the problem reduces to that of part
$(b).$ Thus we first assume that $f\neq 0,$ and $k_0=0.$ Then
$\Ad(\exp(\beta\, \mathbf{y}_H))$ will transform $V= \mathbf{x}_g+
\mathbf{y}_h + \mathbf{z}_f$ into a vector $V^{(1)}$ of the form
$\set{0, g_1, h + \beta(- \frac{2}{3}H f'+ f H' ), f},$ where $g_1$
is a certain function of $t.$ Thus to cancel the term $\mathbf{y}_h$
in $V,$ it suffices to choose $H$ as a solution to the equation $h +
\beta(- \frac{2}{3}H f'+ f H' )=0.$ Notice that this is a linear
first order differential equation of the general form
\begin{equation}\label{eq:l1ode}
K'(t)= Q(t, K(t)),
\end{equation}
where $Q$  is a linear function of the unknown function $K,$ and
such linear equations always have a solution. For instance, the
latter equation for $H$ has solution
$$H(t)= C f(t)^{2/3} + f(t)^{2/3}\int_1^t -\frac{h(s)}{\beta f(s)}
ds, \qquad (\beta \neq 0),
$$
where $C$ is an arbitrary constant. Now suppose that $V^{(1)}$
reduces to a vector of the form $\set{0, g^0_1, 0, f}$ when $H$ is
so chosen that the third component of $V^1$ vanishes. Then
$\Ad(\exp(\alpha\, \mathbf{x}_G))$ will map the resulting vector
$V^{(1)}$ to
$$V^{(2)}= \set{0, g- \alpha(- (1/3) G f' + f G'),
0, f}.$$
The condition that the second component of $V^{(2)}$ vanishes is a
linear equation in $G$ of the form ~\eqref{eq:l1ode}, showing that
we can reduce $V$ to $\mathbf{x}_f= k_0 \mathbf{v}_0+ \mathbf{x}_f.$
If on the other hand both $k_0$ and $f$ are nonzero, then
$\Ad(\exp(\beta\, \mathbf{y}_H))$ will map $V= k_0 \mathbf{v}_0 +
\mathbf{x}_g + \mathbf{y}_h + \mathbf{z}_f$ to a vector $V^{(1)}$
with component $\set{k_0, g_1, h+\beta\left(-\frac{H}{3}(3+ 2 f')+ f
H' \right), f},$ for some function $g_1$ depending on $H.$ Let $H_0$
be a function for which the third component of $V^{(1)}$ vanishes,
and denote again by $V^{(1)}= \set{k_0, g_1^0, 0, f}$  the
corresponding image of $V$ under $\Ad(\exp(\beta\,
\mathbf{y}_{H_0})).$ Then,
$$\Ad(\exp(\alpha\, \mathbf{x}_G)) (V^{(1)})= \set{k_0, g- (1/3) \alpha (G(6 + f')- 3 f
G'), f},$$
showing that we can map $V$ as indicated to $k_0 \mathbf{v}_0 +
\mathbf{z}_f,$ and this proves (c).\par For (d), if $h=0,$ then $V=
\mathbf{x}_g + \mathbf{y}_h$ reduces to $\mathbf{x}_g$ and we choose
$\beta=0,$ otherwise it suffices to choose $H$ as a solution to the
linear equation
$$ g + \frac{1}{2}\beta \left(-H h' + h H'\right)=0 ,$$
in order to map $V$ to $\mathbf{y}_h.$ For (e), it suffices to
choose $G= g/ (2 \alpha),$ with $\alpha \neq 0.$ To prove (f), we
simply note that if $k_0=0,$ we let $G$ be the any solution to the
linear {\sc ode}
$$g- \alpha\left( \frac{1}{3} G f' + f G'\right) =0, \qquad (\alpha \neq 0).
$$
If $f=0,$ we let $G$ be a solution to $g- 2 \alpha G=0,$  and if
both $k_0$ and $f$ are nonzero, we let $G$ be the solution to the
linear {\sc ode}
$$ g - \alpha \left(\frac{1}{3} G(6+ f')- 3 f G'\right)=0, \qquad (\alpha \neq
0).
$$
 This completes the proof of Lemma ~\ref{le:rule1}.
\end{proof}
\begin{lem}\label{le:rule2}

Let $g, h$ and $f$ be given functions of time. Then for any
$\varepsilon \in \R,$ and for every nonzero function $K=K(t)$ , we
have
\begin{enumerate}
\item[(a)]
\begin{equation}\label{eq:zf}
\begin{split}
&\Ad(\exp(\varepsilon\, \mathbf{z}_{K}))(\mathbf{x}_g)=
\mathbf{x}_{G(t, \varepsilon)}, \qquad \Ad(\exp(\varepsilon
\mathbf{z}_{K}))(\mathbf{y}_h)= \mathbf{y}_{H(t, \varepsilon)},\\
&\Ad(\exp(\varepsilon\, \mathbf{z}_{K}))(\mathbf{z}_f)=
\mathbf{z}_{F(t, \varepsilon)},
\end{split}
\end{equation}
where the functions $G(t,\varepsilon), H(t, \varepsilon)$ and $F(t,
\varepsilon)$ are given by
\begin{subequations}\label{eq:zfcd}
\begin{align}
G(t, \varepsilon)&= K(t)^{1/3} Q_1\left(\varepsilon - \int_1^t
\frac{ds}{K(s)}\right), \text{
and $Q_1$ satisfies  }               \label{eq:zfxga}\\
g(t)& = K(t)^{1/3} Q_1\left(- \int_1^t \frac{ds}{K(s)}\right).  \label{eq:zfxga}\\
H(t, \varepsilon)&= K(t)^{2/3} Q_2\left(\varepsilon - \int_1^t
\frac{ds}{K(s)}\right), \text{
and $Q_2$ satisfies  } \\
h(t)& = K(t)^{2/3} Q_2\left(- \int_1^t \frac{ds}{K(s)}\right).\\
F(t, \varepsilon)&= K(t) Q_3\left(\varepsilon - \int_1^t
\frac{ds}{K(s)}\right), \text{
and $Q_3$ satisfies  } \\
f(t)& = K(t) Q_3\left(- \int_1^t \frac{ds}{K(s)}\right).
\end{align}
\end{subequations}

\item[(b)]  Let $\mathbf{w}_q,$ where $q= q(t),$ denote collectively all
generators of the form $\mathbf{x}_g, \mathbf{y}_h$ and
$\mathbf{z}_f.$ Then, whenever $q\neq 0,$ there exists a function
$K$ such that $\Ad (\exp(\varepsilon \mathbf{z}_K))
\mathbf{w}_q=\mathbf{w}_{(1)}.$

\end{enumerate}
\end{lem}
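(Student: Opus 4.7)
The plan is to integrate the ODE that defines the adjoint flow on $L$ in order to establish (a), and then to exhibit an appropriate $K$ in (b) using the explicit formulas just obtained.

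For (a), I would use the standard identity $\tfrac{d}{d\varepsilon}\Ad(\exp(\varepsilon\mathbf{z}_K))\mathbf{v} = -[\mathbf{z}_K,\Ad(\exp(\varepsilon\mathbf{z}_K))\mathbf{v}]$ with initial value $\mathbf{v}$. The commutation relations \eqref{eq:com} show that each of the brackets $[\mathbf{z}_K,\mathbf{x}_g]$, $[\mathbf{z}_K,\mathbf{y}_h]$, $[\mathbf{z}_K,\mathbf{z}_f]$ stays within the respective $\mathbf{x}$-, $\mathbf{y}$-, or $\mathbf{z}$-subspace, so the orbit preserves the form $\mathbf{x}_G$, $\mathbf{y}_H$, or $\mathbf{z}_F$. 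In the $\mathbf{x}$-case, the ODE collapses to the first-order linear PDE $G_\varepsilon + K\,G_t = \tfrac{1}{3}K'\,G$ with $G(t,0) = g(t)$. I would solve it by characteristics: the trajectories satisfy $dt/d\varepsilon = K(t)$, so $\eta := \varepsilon - \int_1^t ds/K(s)$ is the characteristic invariant, and dividing the two characteristic equations yields $d(\ln G) = \tfrac{1}{3} d(\ln K)$ along trajectories, so $G/K^{1/3}$ is constant there. Hence $G(t,\varepsilon) = K(t)^{1/3} Q_1(\eta)$, with $Q_1$ determined by evaluating at $\varepsilon = 0$. The $\mathbf{y}$- and $\mathbf{z}$-cases are identical, except that the coefficient $\tfrac{1}{3}$ is replaced by $\tfrac{2}{3}$ and $1$ respectively, producing the remaining formulas of \eqref{eq:zfcd}.

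For (b), I would translate the condition $\Ad(\exp(\varepsilon\mathbf{z}_K))\mathbf{x}_g = \mathbf{x}_{(1)}$ via part (a) into $K(t)^{1/3} Q_1(\varepsilon + \mu(t)) = 1$, where $\mu(t) = -\int_1^t ds/K(s)$ and $Q_1(\mu(t)) = g(t)/K(t)^{1/3}$. Geometrically this says that the time-$(-\varepsilon)$ flow $\psi = \phi_{-\varepsilon}^K$ of $K\partial_t$ sends $g$ to $1$ via the transformation rule $g\circ\psi \cdot (\psi')^{-1/3} = 1$; equivalently, $\psi$ satisfies the autonomous ODE $\psi'(t) = g(\psi(t))^3$. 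Since $g$ is smooth and nowhere zero, this ODE admits a solution by the basic existence theorem, and one then recovers $K$ by realising $\psi$ as the time-$(-\varepsilon)$ flow of $K\partial_t$ for some $\varepsilon$ (say $\varepsilon = 1$). The $\mathbf{y}_h$ and $\mathbf{z}_f$ cases reduce to the analogous ODEs $\psi' = h(\psi)^{3/2}$ and $\psi' = f(\psi)$, the latter being essentially the classical straightening of the nonvanishing field $f\partial_t$.

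The main obstacle I anticipate is the realisation step in (b): producing a single autonomous field $K\partial_t$ whose time-$(-\varepsilon)$ flow reproduces the $\psi$ constructed from ODE theory is a bit delicate, since it requires $\psi$ to be a strictly monotone diffeomorphism without fixed points, and in the $\mathbf{y}$-case the fractional exponent $3/2$ forces attention to the sign of $h$. A less geometric, purely computational route avoids this altogether: construct $K$ directly from the paired conditions $Q_1(\mu(t)) = g(t)/K(t)^{1/3}$ and $Q_1(\varepsilon + \mu(t)) = K(t)^{-1/3}$ by choosing $\varepsilon$ so that the intervals $\mu(\R)$ and $\varepsilon + \mu(\R)$ are disjoint, and exploiting the freedom to prescribe $Q_1$ on each interval independently.
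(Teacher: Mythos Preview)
Your treatment of part~(a) is correct and is essentially the paper's own argument carried out in detail: the paper simply observes, via the Lie series and the bracket $[\mathbf{z}_K,\mathbf{x}_g]=\mathbf{x}_{(Kg'-K'g/3)}$, that the image stays of the form $\mathbf{x}_{G(t,\varepsilon)}$, and then says the explicit formula ``follows from the properties of the flow.'' Your method of characteristics is exactly how one unpacks that phrase.

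For part~(b) the paper does not give a self-contained argument at all; it invokes a result of Neuman and refers to the analogous computation in the Kadomtsev--Petviashvili paper of David, Kamran, Levi and Winternitz. Your primary route---reducing to the autonomous ODE $\psi'=g(\psi)^3$ (resp.\ $h(\psi)^{3/2}$, $f(\psi)$) and then realising $\psi$ as the time-$(-\varepsilon)$ flow of some $K\partial_t$---is in fact the idea behind those references, and the obstacle you flag (embedding a given diffeomorphism in a one-parameter flow) is genuine and is precisely why the paper outsources the argument. Your proposed fallback, however, has a concrete flaw: if you arrange $K$ so that $\mu(\R)$ and $\varepsilon+\mu(\R)$ are disjoint, then for each $t$ the characteristic through $(t,\varepsilon)$ never meets the initial line $\varepsilon=0$. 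In that regime the formula $G(t,\varepsilon)=K(t)^{1/3}Q_1(\varepsilon+\mu(t))$ no longer computes $\Ad(\exp(\varepsilon\mathbf{z}_K))(\mathbf{x}_g)$, because the flow of $\mathbf{z}_K$ at parameter $\varepsilon$ has left the domain on which the adjoint action is defined; $Q_1$ is not a free function you may prescribe on the shifted interval, it is fixed by the initial data on $\mu(\R)$ and simply undefined elsewhere. So the fallback should be dropped, and for (b) you should either cite the same references the paper does or carry the embedding-in-a-flow step through carefully (on an interval, an orientation-preserving diffeomorphism without fixed points can indeed be written as the time-one map of a smooth vector field, and the sign issue in the $\mathbf{y}_h$ case is handled by replacing $h^{3/2}$ with $|h|^{3/2}\operatorname{sgn}h$).
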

\begin{proof}
From Equation ~\eqref{eq:com} we have $[\mathbf{z}_K, \mathbf{x}_g]=
\mathbf{x}_{(Kg'- K'g/3)},$ and this shows that if we express
$\Ad(\exp(\varepsilon\, \mathbf{z}_{K})) (\mathbf{x}_g)$ in terms of
the Lie series and use the linearity of  $\mathbf{x}_g$ as a
function of its argument $g,$ then $\Ad(\exp(\varepsilon
\mathbf{z}_{K})) (\mathbf{x}_g)$ must be of the form
$\mathbf{x}_{G(t, \varepsilon)}$ for a certain function $G= G(t,
\varepsilon).$ Now, the expression for $G(t, \varepsilon)$ follows
from the properties of the flow $\Ad (\exp (\varepsilon
\mathbf{z}_K)) \mathbf{x}_g$ of $\Ad$ through $\mathbf{x}_g$ under
the one-parameter subgroup $\exp(\varepsilon \mathbf{z}_K)$
generated by $K.$ The expressions for $\Ad(\exp(\varepsilon\,
\mathbf{z}_{K})) (\mathbf{y}_h)$ and $\Ad(\exp(\varepsilon\,
\mathbf{z}_K)) (\mathbf{z}_f)$ and for the corresponding functions
$H(t, \varepsilon)$ and $F(t,\varepsilon)$ are derived in a similar
way, and this proves (a).\par
Part (b) simply says that if for instance we have $g \neq 0,$ then
we can choose the function $K(t)$ in such a way that the resulting
function $G(t,\varepsilon)$ in ~\eqref{eq:zf} is $1,$ and that the
same holds for both $h$ and $f$ and for the corresponding functions
$H(t, \varepsilon)$ and $F(t, \varepsilon)$ in ~\eqref{eq:zf}. The
existence of such functions $K$ is just a consequence of a result of
Neuman \cite{neuman}. A proof of the existence of $K$ is also given
in \cite{pavkp}. This completes the proof of the lemma.
\end{proof}

\subsection{Classification of one-dimensional subalgebras}
To implement this classification, we denote as usual by $\mathbf{v}=
k_0 \mathbf{v}_0 + \mathbf{x}_g + \mathbf{y}_h + \mathbf{z}_f$ a
general nonzero vector in $L,$  and for $\mathbf{v}_1, \mathbf{v}_2
\in L $ we write $\mathbf{v}_1 \sim \mathbf{v}_2$ if $\mathbf{v}_2=
\Ad_B (\mathbf{v}_1),$ for some group element $B$ in $G.$ When both
$k_0$ and $f$ equal zero, we have $\mathbf{v} \sim \mathbf{x}_g$ or
$\mathbf{v} \sim \mathbf{y}_h$ by part (e) of Lemma ~\ref{le:rule1}.
Otherwise, we have $k_0^2 + f^2 \neq 0,$ and hence $\mathbf{v} \sim
k_0 \mathbf{v}_0  + \mathbf{z}_f$ by part (c) of Lemma
~\ref{le:rule1}. Thus every one-dimensional subalgebra of $L$ is
equivalent under the adjoint representation to either
$\mathbf{x}_g$, $\mathbf{y}_h$ or $k_0 \mathbf{v}_0 + \mathbf{z}_f.$
Since we have $\Ad_B (k_0 \mathbf{v}_0 + \mathbf{z}_f)= k_0
\mathbf{v}_0 + Ad_B (\mathbf{z}_f),$ for every group element $B,$ it
follows from Lemma ~\ref{le:rule2} that if $f=0,$ then $(k_0
\mathbf{v}_0 + \mathbf{z}_f) \sim \mathbf{v}_0,$ other wise $(k_0
\mathbf{v}_0 + \mathbf{z}_f) \sim k_0 \mathbf{v}_0 +
\mathbf{z}_{(1)}.$ The same lemma thus implies that every one
dimensional subalgebra of the ZK symmetry algebra is equivalent
under the adjoint representation to one of the following Lie
algebras

\begin{equation}\label{eq:d1salg}
\mathcal{L}_{1,1}=\set{\mathbf{x}_{(1)}},\quad
\mathcal{L}_{1,2}=\set{\mathbf{y}_{(1)}},\quad
\mathcal{L}_{1,3}=\set{\mathbf{v}_0}\quad \text{ or }\quad
\mathcal{L}_{1,4}=\set{k_0 \mathbf{v}_0 + \mathbf{z}_{(1)}}.
\end{equation}

Note that according to ~\eqref{eq:gtors}, we have
\begin{equation}\label{eq:zto1}
\mathbf{x}_{(1)}= \pd_x, \qquad \mathbf{y}_{(1)}= \pd_y,\quad
\text{and }\quad \mathbf{z}_{(1)}= \pd_t,
\end{equation}
which shows that the canonical forms thus obtained are very
simplified. To see why these four Lie algebras are non equivalent,
we first note that $\Ad(\exp(\varepsilon\, \mathbf{v}_0))$ acts only
diagonally, by scalling its argument, while for every function
$F=F(t),$ $\Ad(\exp(\varepsilon\, \mathbf{z}_F))$ leaves
$\mathbf{v}_0$ invariant and maps, by Lemma ~\ref{le:rule2},
$\mathbf{x}_g, \mathbf{y}_h$ and $\mathbf{z}_f$ to
$\mathbf{x}_\frak{g}, \mathbf{y}_\frak{h}, \mathbf{z}_\frak{f},$
respectively, for some functions  $\frak{g}= \frak{g}(t,
\varepsilon), \frak{h}=\frak{h}(t, \varepsilon)$ and
$\frak{f}=\frak{f}(t, \varepsilon).$ On the other hand, if we set
$$\mathcal{F}= \Ad(\exp(\alpha\, \mathbf{x}_G)) \Ad(\exp(\beta\,
\mathbf{y}_H)),$$
then we have,
\begin{align*}
\mathcal{F}(\mathbf{v}_0)&= \set{1, -2 \alpha G,-\beta H, 0 } \sim \mathbf{v}_0\\
\intertext{and}
\mathcal{F}(k_0 \mathbf{v}_0 + \mathbf{z}_{(1)})&= \set{k_0
\mathbf{v}_0, \alpha(- 2 G + G')+
(1/4) \beta^2 (H'^2 - H H''), \beta (- H+ H'), 1 }\\
           & \sim
k_0\mathbf{v}_0 + \mathbf{z}_{(1)}.
\end{align*}
We also have $\mathcal{F}(\mathbf{x}_{(1)})= \mathbf{x}_{(1)}$, and
$\mathcal{F}(\mathbf{y}_{(1)})= \mathbf{y}_{(1)}.$ This shows that
the four Lie algebras thus obtained are non-equivalent under the
adjoint representation of $G,$ and thus completes the classification
of one-dimensional subalgebras of $L.$
\subsection{Classification of two-dimensional subalgebras}
Every two dimensional subalgebra $L_2$ of $L$ is solvable and if we
denote by $\mathcal{B} =\set{V_1, V_2}$ a basis of $L_2,$ then
either $L_2$ is abelian and thus $[V_1, V_2]= 0,$ or $L_2$ has a
nonzero nilradical, in which case its commutation relations can be
put in the form $[V_1, V_2]= V_1.$ To classify two dimensional
subalgebras of $L$, we let $V_1$ be in one of the canonical forms
~\eqref{eq:d1salg}, while $V_2= \mathbf{v}$ is a general vector of
the form ~\eqref{eq:genv}. By the possible forms for $V_1$ given in
~\eqref{eq:d1salg}, we shall therefore have to consider four basic
cases for $\mathcal{B}$. To reduce $L_2$ to a canonical form, we
transform it under maps of the form $\Ad_B,$ for appropriately
chosen group elements $B$ in the symmetry group $G,$ and also make
use of the commutation relations in $L_2$ to obtain some
restrictions on the functions $g, h, f$ and the free parameter $k_0$
defining $V_2 = \mathbf{v}.$ If $\mathcal{B}_1$ is a basis of a
subalgebra $S_1$ and $\mathcal{B}_2$ a basis of a subalgebra $S_2,$
we write $\mathcal{B}_1 \sim \mathcal{B}_2$ if $S_2= \Ad_B (S_1)$
for some $B \in G.$ In this subsection, we shall let $c_1, c_2$ and
$c_3$ denote three arbitrary constants.
\subsubsection{Abelian subalgebras}
\begin{flushleft}
Case (a):  $\quad \mathcal{B}= \set{\mathbf{v}_0, \mathbf{v}}.$\\[2mm]
\end{flushleft}

The condition $[\mathbf{v}_0, \mathbf{v}]=0,$ implies that $g=h=0.$
Thus $\mathbf{v}= k_0 \mathbf{v}_0 + \mathbf{z}_f.$ Since for every
$\varepsilon \in \R$ and every function $K$ we have
$$\Ad(\exp(\varepsilon\, \mathbf{z}_K)) (k_0 \mathbf{v}_0 +
\mathbf{z}_f)= k_0 \mathbf{v}_0+\Ad(\exp(\varepsilon\,
\mathbf{z}_K)) (\mathbf{z}_f),$$
 and since $\Ad(\exp(\varepsilon
\mathbf{z}_K )) \mathbf{v}_0= \mathbf{v}_0,$ it follows that if
$f=0,$ then $\mathbf{v} \sim \mathbf{v}_0.$ Otherwise, $\mathbf{v}
\sim k_0 \mathbf{v}_0 + \mathbf{z}_{(1)}.$ Since $L_2$ is
two-dimensional, we must have
$\mathcal{B}\sim \set{\mathbf{v}_0, k_0 \mathbf{v}_0 + \mathbf{z}_{(1)}}.$\\[1mm]
\begin{flushleft}
Case (b):  $\quad \mathcal{B}= \set{ k_1 \mathbf{v}_0 + \mathbf{z}_{(1)},
\mathbf{v}}, \; k_1 \in \R.$\\[2mm]
\end{flushleft}
 We must
have in this case
\begin{equation}\label{eq:ab2}
g= c_1 e^{2 k_1 t}, \qquad h= c_2 e^{k_1 t}, \quad \text{and } f=
c_3.
\end{equation}
Thus $\mathcal{B} \sim \set{k_1 \mathbf{v}_0 + \mathbf{z}_{(1)},
\mathbf{v}},$ where the components $g, h, f$ of $\mathbf{v}$ are
given by ~\eqref{eq:ab2}.

\begin{flushleft}
Case (c):  $\quad \mathcal{B}= \set{\mathbf{x}_{(1)}, \mathbf{v}}.$\\[2mm]
\end{flushleft}

We must have $f= 6 k_0 t+ c_3.$ If $k_0=0$ and $c_3=0,$ then by
 part (d) of Lemma ~\ref{le:rule1}, under $\Ad(\exp(\beta\, \mathbf{y}_H))$,  we have $\mathbf{v} \sim \mathbf{x}_g$
 or $\mathbf{v} \sim \mathbf{y}_h,$ and thus, since $\Ad(\exp(\beta\, \mathbf{y}_H)) \mathbf{x}_g= \mathbf{x}_g$ for all $g,$
 we have $\mathcal{B} \sim \set{\mathbf{x}_{(1)}, \mathbf{x}_g}$ or $\mathcal{B} \sim \set{\mathbf{x}_{(1)},
 \mathbf{y}_h}.$ If $k_0 \neq 0$ or $c_3 \neq 0,$ then by part (c) of Lemma
 ~\ref{le:rule1}, we have $\mathbf{v} \sim k_0 \mathbf{v}_0 + \mathbf{z}_{(6 k_0 t+ c_3)},$ and hence
 $$\mathcal{B} \sim \set{\mathbf{x}_{(1)},k_0 \mathbf{v}_0 + \mathbf{z}_{(6 k_0 t+ c_3)} }.$$

\begin{flushleft}
Case (d):  $\quad \mathcal{B}= \set{\mathbf{y}_{(1)}, \mathbf{v}}.$\\[2mm]
\end{flushleft}

We must have $f= -(3/2) k_0 t + c_3,$ and $h=c_2.$ Thus if $k_0=0$
and $c_3=0,$ then $\mathbf{v} \sim \mathbf{x}_g +
\mathbf{y}_{(c_2)},$ and no further reduction of the basis
$\mathcal{B} = \set{\mathbf{y}_{(1)}, \mathbf{x}_g +
\mathbf{y}_{(c_2)}}$ is possible. If $k_0\neq 0$ or $c_3\neq 0,$
then
$$\mathcal{B} \sim \set{\mathbf{y}_{(1)}, k_0 \mathbf{v}_0 +
\mathbf{y}_{(c_{\, 2})} + \mathbf{z}_{(-(3/2) k_0 t + c_3)}}. $$
This is because by part (f) of Lemma ~\ref{le:rule1}, we have
$\mathbf{v} \sim k_0 \mathbf{v}_0 + \mathbf{y}_{c_2} +
\mathbf{z}_f,$ under $\Ad(\exp(\alpha\, \mathbf{x}_G)),$ and the
operator $\Ad(\exp(\alpha\, \mathbf{x}_G))$ leaves
$\mathbf{y}_{(1)}$ invariant for every $\alpha$ and $G.$ Again, all
the canonical representatives of two-dimensional subalgebras thus
obtained are clearly pairwise nonequivalent, and this completes the
classification problem in the abelian case.
\subsubsection{Non-abelian subalgebras}\label{ss:nonab}
\begin{flushleft}
Case (a):  $\quad \mathcal{B}= \set{\mathbf{v}_0, \mathbf{v}}$ or
$ \mathcal{B}= \set{k_1 \mathbf{v}_0 +
\mathbf{z}_{(1)}, \mathbf{v}}, \quad k_1 \in \R.$\\[2mm]
\end{flushleft}

Since $\mathbf{v}_0$ is not in the derived subalgebra of $L,$ the
case $\ \mathcal{B}= \set{\mathbf{v}_0, \mathbf{v}}$ cannot occur,
while the case $\mathcal{B}= \set{k_1 \mathbf{v}_0 +
\mathbf{z}_{(1)}, \mathbf{v}}$ occurs only if $k_1=0,$  in which
case we must have
$$ g= c_1,\quad  h= c_2,\quad \text{ and} \quad  f= t+ c_3
$$
Consequently, in this case
$$ \mathcal{B} \sim \set{\mathbf{z}_{(1)}, k_0 \mathbf{v}_0 +
\mathbf{x}_{(c_1)}+ \mathbf{y}_{(c_2)} + \mathbf{z}_{(t+ c_3)}}.$$

\begin{flushleft}
Case (b):  $\quad \mathcal{B}= \set{\mathbf{x}_{(1)}, \mathbf{v}}.$\\[2mm]
\end{flushleft}
We must have in this case $f= (3- 6 k_0)t + c_3.$ Thus if $k_0= 1/2$
and $c_3=0,$ then by Lemma ~\ref{le:rule1}, part (b),
$\Ad(\exp(\alpha\, \mathbf{x}_G)) \Ad(\exp(\beta\, \mathbf{y}_H))$
will map $\mathbf{v}$ to $\mathbf{v}_0$ and leave $\mathbf{x}_{(1)}$
unchanged. Hence, $\mathcal{B} \sim \set{\mathbf{x}_{(1)},
\mathbf{v}_0}$ in this case. If either $k_0 \neq 1/2$ or $c_3 \neq
0,$ the same lemma implies that $\mathbf{v} \sim k_0 \mathbf{v}_0 +
\mathbf{z}_f,$ under a similar transformation that leaves
$\mathbf{x}_{(1)}$ unchanged. Consequently,  in this second case we
have $\mathcal{B} \sim \set{\mathbf{x}_{(1)}, k_0 \mathbf{v}_0 +
\mathbf{z}_{((3-6 k_0)t+ c_3)}}.$

\begin{flushleft}
Case (c):  $\quad \mathcal{B}= \set{\mathbf{y}_{(1)}, \mathbf{v}}.$\\[2mm]
\end{flushleft}
The commutation relations imply that $f= (3/2)(1- k_0)t + c_3,$ and
$h= c_2.$ Thus if $k_0=1$ and $c_3=0,$ by part (e) of Lemma
~\ref{le:rule1}, $\Ad(\exp(\alpha\, \mathbf{x}_G))$ maps
$\mathbf{v}$ to $\mathbf{v}_0 + \mathbf{y}_{(c_2)}.$ Since this
operator leaves $\mathbf{y}_{(1)}$ invariant, we must have
$\mathcal{B} \sim \set{\mathbf{y}_{(1)}, \mathbf{v}_0 +
\mathbf{y}_{(c_2)}}$ in this case.  If $k_0 \neq 1$ or $c_3 \neq 0,$
part (f) of the same lemma shows that $\Ad(\exp(\alpha\,
\mathbf{x}_G))$ maps $\mathbf{v}$ to $k_0 \mathbf{v}_0 +
\mathbf{y}_{(c_2)} + \mathbf{z}_f.$ Consequently,
$$\mathcal{B}
\sim \set{\mathbf{y}_{(1)}, k_0 \mathbf{v}_0 + \mathbf{y}_{(c_2)} +
\mathbf{z}_{( (3/2)(1- k_0)t + c_3 )}}.$$ Since the canonical forms
thus obtained are non-equivalent by construction, this completes the
classification of two-dimensional subalgebras of $L$ in the
non-abelian case. \par
We have thus obtained the following list of canonical forms of
non-equivalent two-dimensional subalgebras of $L,$ in which $k_0,
k_1, c_1, c_2,$ and $c_3$ are free parameters, unless otherwise
specified.
\begin{enumerate}
\topsep = 2mm
\itemsep = 2.5mm
\item[(1)] {$\mathsf{ Abelian\; subalgebras}$}
\begin{enumerate}
\topsep = 2.5mm
\itemsep = 2mm
\item[$\mathcal{L}_{2,1}$] = $\set{{\mathbf{v}_0, k_0 \mathbf{v}_0 + \mathbf{z}_{(1)}}}$
\item[$\mathcal{L}_{2,2}$] = $\set{k_1 \mathbf{v}_0 + \mathbf{z}_{(1)}, k_0 \mathbf{v}_0 + \mathbf{x}_{(c_1 e^{2 k_1 t})} + \mathbf{y}_{(c_2 e^{k_1 t})}+
\mathbf{z}_{(c_3)}}$
\item[$\mathcal{L}_{2,3}$] = $\set{\mathbf{x}_{(1)}, \mathbf{x}_{g}}, \qquad (g' \neq 0)$
\item[$\mathcal{L}_{2,4}$] = $\set{\mathbf{x}_{(1)}, \mathbf{y}_{h}  } $
\item[$\mathcal{L}_{2,5}$] = $\set{\mathbf{x}_{(1)}, k_0 \mathbf{v}_0 + \mathbf{z}_{(6 k_0t+ c_3)}}, \qquad k_0^2+ c_3^2 \neq 0$
\item[$\mathcal{L}_{2,6}$] = $\set{\mathbf{y}_{(1)},  \mathbf{x}_{g}+ \mathbf{y}_{(c_2)}  }$
\item[$\mathcal{L}_{2,7}$] = $\set{\mathbf{y}_{(1)}, k_0 \mathbf{v}_0+ \mathbf{y}_{(c_2)}+ \mathbf{z}_{\left((-3/2)(k_0 t + c_3)\right)}}, \qquad k_0^2 + c_3^2 \neq 0$
\end{enumerate}
\item[(2)] {$\mathsf{ Non\!-\!abelian\; subalgebras}$}
\begin{enumerate}
\topsep = 2mm
\itemsep = 1.5mm
\item[$\mathcal{L}_{2,8}$] = $\set{\mathbf{z}_{(1)}, k_0 \mathbf{v}_0 + \mathbf{x}_{(c_1)} + \mathbf{y}_{(c_2)} + \mathbf{z}_{(t+ c_3)}}$
\item[$\mathcal{L}_{2,9}$] = $\set{\mathbf{x}_{(1)}, \mathbf{v}_0}$
\item[$\mathcal{L}_{2,10}$] = $\set{\mathbf{x}_{(1)}, k_0 \mathbf{v}_0  + \mathbf{z}_{((3- 6 k_0)t +
c_3)}},  \qquad (k_0 - \frac{1}{2})^2 + c_3^2 \neq 0 $
\item[$\mathcal{L}_{2,11}$] = $\set{\mathbf{y}_{(1)}, \mathbf{v}_0 + \mathbf{y}_{(c_2)}}$
\item[$\mathcal{L}_{2,12}$] = $\set{\mathbf{y}_{(1)}, k_0 \mathbf{v}_0 + \mathbf{y}_{(c_2)}+ \mathbf{z}_{((3/2)(1-k_0)t + c_3)}},\qquad  (k_0-1)^2+ c_3^2 \neq 0$
\end{enumerate}
\end{enumerate}

\section{Similarity reductions of the ZK
equation}\label{s:reduction} We turn our attention in this section
to the problem of finding the group-invariant solutions of the ZK
equation. By group invariant solutions we mean solutions which are
invariant by the group transformations in the sense that, roughly
speaking, each of them is transformed into itself by every group
transformation. In other words, their graph is a locally
$G$-invariant subset. In the case of a {\sc pde} such solutions can
be found by solving a differential equation in fewer independent
variables.  We shall therefore seek a reduction of the ZK equation
to $(1+1)$-dimensional equations or to ordinary differential
equations. To reduce the equation to one with $s$ fewer independent
variables, the general procedure \cite{olv1} is to look for
subgroups of the full symmetry group whose orbits have dimension
$s.$ Each such subgroup yields a set of $3-s$ functionally
independent invariants which can be written in the form $\xi=
\eta(\mathbf{w}),$ where $\mathbf{w} = (t,x,y),$ in addition to
another functionally independent invariant of the form $w= \zeta
(\mathbf{w}, u).$ We must clearly have $1\leq s <3,$ since the ZK
equation has exactly three independent variables.  Solving this last
equality for the solution $u$ of the original equation shows that
$u$ will always be determined in the case of the ZK equation by some
equations of the form
\beq \label{eq:u} u(t,x,y)= \alpha + \beta w(\xi), \qquad \xi=
\eta(\mathbf{w}) \eeq
where $\alpha$ and $\beta$ are functions of $\mathbf{w}.$ Equation
~\eqref{eq:u} can be used to rewrite the original equation in terms
of the invariant functions $\xi= \eta(\mathbf{w}),$ and $w$
considered as new variables, and this yields the reduced system in
only $3-s$ independent variables \cite{olv1}. Expressing the
solution of the reduced equation in the form $w= F(\xi),$ and
substituting in this last equality the expressions for $w$ and $\xi$
from equation ~\eqref{eq:u}, the solution $u$ of the original
equation can be found.  We treat separately reductions by one
dimensional subgroups and reductions by two dimensional subgroups.

\subsection{Reduction by one-dimensional subgroups}
\label{sb:1sbgp}
 As already indicated, a reduction by one dimensional subgroups will
 yield a partial differential equation in only $2= 3-1$ independent
 variables. The generic form of the infinitesimal generator of such a group is
 given by  ~\eqref{eq:genvect}.
In the case of a reduction by one-dimensional subgroups, the
function the $ \xi= \eta(\mathbf{w})$ in ~\eqref{eq:u} will have the
form $\xi=(\eta_1, \eta_2),$ and for simplicity of notation  we
shall set $r=\eta_1$ and $z=\eta_2.$ In order to completely define a
reduced equation, we will only need  to give the explicit formulas
for $u,z,$ and $r$  corresponding to Equation ~\eqref{eq:u} and the
reduced equation itself. We first give reductions based on the
classification of one-dimensional subalgebras given in
~\eqref{eq:d1salg}, and then reductions based of a direct case
analysis afterward.
\par
\subsubsection{Reductions based on one-dimensional
subalgebras classification }\label{ss:byclas} This procedure
consists in finding the reduced equation corresponding to each of
the four non-equivalent canonical forms of one-dimensional
subalgebras of $L$ given in ~\eqref{eq:d1salg}. The list of all
solutions to the four resulting reduced equations should represent
an optimal list of $1$-parameter group invariant solutions, with the
property that every other $1$-parameter group invariant solution of
the ZK equation can be mapped to precisely one solution in the list
via the adjoint representation of $G.$\par
\begin{enumerate}
\topsep = 1.5mm
\itemsep = 1.5mm

\item[(1)] Reduction by $\mathcal{L}_{1,1}= \set{\mathbf{x}_{(1)}}.$ \\[2mm]
For $\mathbf{x}_{(1)}= \pd_x,$ the reduction formula is simply $u=
u(t,y), r=t,$ and $z=y.$ The reduced equation is the linear equation
$$ u_{y,y}=0$$
with solution $u= q_1 y + q_2$, where $q_1$ and $q_2$ are arbitrary
functions of $t$.
\item[(2)] Reduction by $\mathcal{L}_{1,2}= \set{\mathbf{y}_{(1)}}.$ \\[2mm]
For $\mathbf{y}_{(1)}= \pd_y,$ the reduction formula is $u=u(t,x),\,
r=t,\, z=x.$ The reduced equation is
$$- u_x^2 + u_{t,x}- u u_{x,x}=0. $$
\item[(3)] Reduction by $\mathcal{L}_{1,3}= \set{\mathbf{v}_0}.$ \\[2mm]
The reduction formula is
\begin{equation}\label{eq:l13formula}
    u= x w(t,z), \quad r= t, \quad z=y^2/2,
\end{equation}
and the corresponding reduced equation is
\begin{equation}\label{eq:redl13}
w_r - 2 w_z - (w- z w_z)^2 -z w_{r,z} - 4z w_{z,z} - z^2 w
w_{z,z}=0.
\end{equation}

\item[(4)] Reduction by $\mathcal{L}_{1,4}= \set{k_0 \mathbf{v}_0 + \mathbf{z}_{(1)}}.$ \\[2mm]
The reduction formula is
$$ u = x w(r,z), \quad r= 2 k_0 t - \ln(x), \quad z= y^2/x ,$$ which
gives rise to the reduced equation
\begin{equation}\label{eq:l14}
\begin{split}
 2 w_z &+ (-w + w_r + z w_z)^2 + 2 k_0 (w_{r,r}+ zw_{r,z}) + 4 z
w_{z,z}\\
& + w (- w_r + w_{r,r}+ z (2 w_{r,z}+ z w_{z,z}))- 2 k_0 w_r=0.
\end{split}
\end{equation}
For $k_0=0,$ this last equation corresponds to the much simpler
reduction by $\set{\mathbf{z}_{(1)}}$ given by
\begin{equation}\label{eq:l14z1}
  u_x^2 + u u_{x,x} + u_{y,y}=0.
\end{equation}
\end{enumerate}
\subsubsection{Reduction by direct case analysis}\label{ss:byanal}This procedure
consists in obtaining all possible reductions of the ZK equation by
one-parameter subgroups, by a suitable consideration of each
relevant case separately. Although it gives directly all possible
cases of reduced equations, it might involve tedious or too long
calculations in certain cases.\par
\underline{\bf Case 1} $\colon$ $g=h=f=0,$ and $k_0 \neq 0$\\
This is just a reduction by the subgroup generated by
$\mathbf{v}_0.$  The reduction formula and corresponding reduced
equations are already given in ~\eqref{eq:l13formula} and
~\eqref{eq:redl13}.\\[2mm]
\underline{\bf Case 2} $\colon$ $h=f=0,$ and $ g \neq 0,$ and $k_0$ arbitrary\\

$\qquad$ \underline{Case 2a} $\colon$ $k_0=0$\\
In this case we have
\beq \label{eq:frmp2a} u= \frac{g'}{g}(w-x),\quad r=t, \quad z= y.
\eeq
The reduced equation is
\beq \label{eq:redp2a}  g''+ g'w_{z,z}=0. \eeq This gives rise to
the invariant solution
$$
u = \frac{g'}{g}(w-x), \quad w=a(t) y + b(t), \quad g(t)= a_1 t+
a_2$$
where $a(t),\, b(t)$ are arbitrary functions, while $a_1$ and  $a_2$
are arbitrary constants.\\[1mm]

$\qquad$ \underline{Case 2b} : $ k_0\neq 0$

The reduction formula is \beq \label{eq:frmp2b} u= y^2 w(t, z) +
\frac{g'}{2 a},\quad r=t, \quad z= \frac{1}{y^2}(x+ g/(2k_0)), \eeq
and the reduced equation is \beq \label{eq:redp2b} - 2 w+ 2 z w_z -
w_z^2 +  w_{z,t} - (w+ 4 z^2)w_{z,z}=0. \eeq

\underline{\bf Case 3} $\colon$ $f=0,$ and $ h \neq 0$ \\

$\qquad$ \underline{Case 3a} $\colon$ $k_0=0$\\
The reduction formula is
\beq \label{eq:frmp3a} u= \frac{1}{4 h}( w(t, z)- (2 y g'+ y^2
h'')),\quad r=t, \quad z= x- \frac{y}{2 h}\left( g + y h'/2\right).
\eeq
The reduced equation is the system
\begin{align}
   h' w_z + h''&=0 \notag  \\
   w_{z,z} &=0     \notag  \\
   - w_{z}^2 + 4 w_{z,t} - w w_{z,z}&=0. \label{eq:redp3a_1}
\end{align}
This system implies that
$$w = \frac{z}{-t + a_1} + a(t),
\qquad h= \frac{a_2}{2} t^2 + a_1 a_2 t + a_3, $$
and the corresponding invariant solution is
$$u=
\frac{-t (2 a_1 + t)x a_2 - 2 x a_3 + a(t) (a_1 + t) (t (2 a_1
+t)a_2 + 2 a_3)+ y (g + (-a_1 -t)g')} {(a_1+t)(t (2 a_1 +t)a_2 + 2
a_3)},
$$
where $a(t)$ is an arbitrary function, while $a_1, a_2$ and $a_3$
are arbitrary constants. \\[1mm]

$\qquad$ \underline{Case 3b} $\colon$ $k_0 \neq 0$\\

The reduction formula is \\
\begin{align*}
\label{eq:frmp3b}
u &= (2 h + k_0 y)^2 w(t, z) + \frac{k_0 g' + 2(h + k_0 y)h''}{2 k_0^2} \\
r &=t,\quad z = \frac{1}{(2 h + k_0 y)^2} \left(  x+ \frac{k_0 g + 2
(h + k_0 y)h'}{2 k_0^2} \right)
\end{align*}
and the corresponding reduced equation in which we may set $k_0=1$
is
\beq \label{eq:redp3b}   - 2 k_0^2 w + 2 k_0^2 z w_z - w_z^2 +
w_{z,t}- (w+ 4 k_0^2 z^2) w_{z,z}=0. \eeq

\underline{\bf Case 4} : $f\neq 0$ \\

In this case the coefficient of $\partial_t$  in the expression of
the operator $\mathbf{z}_f$ is nonzero and thus we may no longer
consider the variable $t$ as trivially invariant.  We make the
simplifying assumption that in ~\eqref{eq:genvect} we have $k_0 =
0,$ so that we are looking for a reduction of the equation by a
generic vector field of the form
\begin{equation}
\label{eq:genvect234} \mathbf{v}=  \mathbf{x}_g + \mathbf{y}_h+
\mathbf{z}_f.
\end{equation}
The expressions for the invariant functions $w, z,$ and $r$ are
found after some long calculations to be given by
\begin{subequations}\label{eq:invcase4}
\begin{align}
 r &= \frac{1}{f^{2/3}} \left( y - \frac{1}{3} f^{2/3} G
\right)
\\[1.5mm]
\begin{split}
      z &= \frac{1}{54 f^{4/3}} \left[   54 x f + 3 f^{2/3} h G - 9 y (h + y f')
       \right]\\
       &\quad +\frac{1}{54}\left[ -9 \int \left( \frac{g}{f^{4/3}} +
       \frac{G h'}{3 f^{2/3}} + \frac{1}{9} G^{\, 2} f'' \right) dt + G^2 f'
       \right]
\end{split}\\[1.5mm]
w   &=  \frac{36 u f^2 - (h + 2 y f')^2 + 6 f (g + 2 x f'+ y (h' + y
f''))}{36 f^{4/3}},\\
 \intertext{ where}
G & = \int \frac{h}{ f^{5/3}} dt. \notag
\end{align}
\end{subequations}
The reduced equation in this case is
\beq \label{eq:redf} w_{z}^2  + w w_{z, z} + w_{r,\, r}=0. \eeq
It appears that equations ~\eqref{eq:l14z1} and ~\eqref{eq:redf} are
exactly the same, despite the long calculations leading to the
latter equation, and this confirms in some sense the results of part
(c) of Lemma ~\ref{le:rule1} and that of Lemma ~\ref{le:rule2} which
assert  that when $k_0=0,$ and $f\neq 0,$  the corresponding vector
$\mathbf{v}$ of ~\eqref{eq:genvect234} is equivalent to
$\mathbf{z}_{(1)}.$ In a similar way, in the case where $f\neq 0$
and $k_0 \neq 0,$ the equation obtained by direct case analysis,
although not calculated in this paper, should match that obtained in
~\eqref{eq:l14}. Similar correspondences can be established between
the equations obtained in Section ~\ref{ss:byclas} and Section
~\ref{ss:byanal}.\par

 The classification obtained by direct case analysis in this section contains
an exhaustive classification of all possible similarity reductions
by one dimensional subgroups, except the case when $k_0 \neq0$ and
$f\neq 0.$

\subsection{Reduction by two-dimensional subgroups}
\label{sb:2sbgp} Despite the list of all twelve canonical forms of
non-equivalent two-dimensional subalgebras of the ZK symmetry
algebra given in Section ~\ref{ss:nonab}, we shall only consider
reductions of the equation by all pairs of distinct generators of
the symmetry algebra (except those pairs of the form
$\set{\mathbf{v}_{F_1},\mathbf{v}_{F_2} }$ defined by the same type
of generators). This is because space limitations precludes the
treatment of all twelve cases in this paper. Also, as indicated in
\cite{pavkp}, these reductions do not yield additional solutions
when compared with those obtained from reductions by one-dimensional
subgroups. However, they give rise to equations which are normally
easier to solve. Each of the pairs of generators that we shall
consider generates a subalgebra whose corresponding action turns out
to have orbits of dimension two, and thus reduces the equation to an
{\sc ode}. We shall always indicate to which of the classified
canonical forms of two-dimensional subalgebras they correspond.
\par

The similarity variables in terms of which the reduced equations are
expressed will be found by solving a system of first order partial
differential equation of the form
\begin{equation}
\begin{cases}
\label{eq:inv2}
\mathbf{v}_ {F_1} \cdot I = 0 &  \\
\mathbf{w}_{F_2} \cdot I = 0 &
\end{cases}
\end{equation}
where each $\mathbf{v}$ and $\mathbf{w}$ are operators depending on
the functions $F_1$ and $F_2,$ respectively. Because of the
arbitrary functions appearing in ~\eqref{eq:inv2}, it can be
difficult to solve this system directly in certain cases. However,
Equation ~\eqref{eq:inv2} becomes more tractable when it is
transformed into the equivalent adjoint system of total differential
equation. The process for this transformation and the method for
solving the resulting system of total differential equations are
described in \cite{ndog04, forsyth}. \par

One important aspect of Equation ~\eqref{eq:inv2} is its
integrability condition. If we consider for example this system with
 $\mathbf{v}_ {F_1}= \mathbf{x}_g$, and
$\mathbf{w}_ {F_2}= \mathbf{z}_f$ (see Equation ~\eqref{eq:com}) and
$f(t)=1,$ then we have $\mathbf{x}_{g} \equiv (0, g, 0, -g)$  and
$\mathbf{z}_{f} \equiv (1, 0, 0, 0).$ We may thus ignore the
variable $y,$ in which case a function $F(x,u)$ is
 an invariant if and only if $\mathbf{x}_{g} \cdot F(x,u)=0.$
 This in turn is equivalent to
$$
\frac{g'}{g}= \frac{F_x (x,u)}{F_u (x,u)}= a, \qquad \text{where $a$
is a constant}.
$$
This shows that the function $g$ of the operator $\mathbf{x}_g$ most
satisfy $g= C e^{at},$ for some constant $C.$ More generally, for an
arbitrary pair $\set{g,f},$ the compatibility condition for Equation
~\eqref{eq:inv2}  is given by

\begin{align*}
 \qquad &\quad  -3 f g'^2 - g^2 f'' + 3 g (f' g' + f g'')=0 .\\
\end{align*}

This means that for any solution of ~\eqref{eq:inv2} to exist, we
ought to have in this case
\begin{equation}
\label{eq:comp13} g^3 = f \exp \left(\int \frac{K}{f} \, dt \right).
\end{equation}
When the integrability condition is satisfied, Equation
~\eqref{eq:inv2} will yield two invariants $w,$ and $z,$ and in
order to obtain the reduced equation we shall set $w= w(z).$

\begin{flushleft} {\bf Reduction by}  $\set{\mathbf{v}_0, \mathbf{x}_g}$ \end{flushleft}
The corresponding Lie algebra belongs to type $\mathcal{L}_{2,9}.$
  The reduction formula is
$$ u = g' y^2 \left(w(z) - \frac{x}{g y^2} \right), \qquad z= t $$
and the reduced equation is
$$
-2 w g' - \frac{g''}{g}=0.
$$

This yields the group-invariant solution
$$
u= - y^2 g'\left( \frac{x}{y^2 g} + \frac{g''}{2 g g'}  \right).
$$

\begin{flushleft} {\bf Reduction by} $\set{\mathbf{v}_0, \mathbf{y}_h}$ \end{flushleft}
This Lie algebra belongs to the family of type $\mathcal{L}_{2,11}$
Lie algebras. The reduction formula is

$$u= \left[ \left( \frac{2 x}{h'} - \frac{y^2}{2 h}\right) w(z) -
\frac{y^2}{2 h} \right] \frac{h''}{2}, \qquad z=t.$$

The reduced equation is a generalized Riccati equation of the form
\begin{equation}
\label{eq:red02} w' + \alpha(t)w^2 + \beta(t)w + \gamma(t)=0
\end{equation}

where $\alpha(t), \beta(t),$ and  $\gamma(t)$ are some functions of
time.

\begin{flushleft} {\bf Reduction by} $\set{\mathbf{x}_g, \mathbf{y}_h} $   \end{flushleft}
This is a Lie algebra of type  $\mathcal{L}_{2,4}.$  The reduction
formula is
$$
u = \frac{1}{ 2 h g} \left[ w(z) - 2h g' x + (g' h' - g
h'')\frac{y^2}{2} \right], \qquad z=t.
$$
The reduced equation corresponds in this case to a mere
integrability condition  of the form
$$- g'h'+ 2 h g''- g h''=0. $$
A first integral for this last equation is
$$ h'- 2 \frac{g'}{g}h + \frac{K}{g}=0, \qquad K= \text{constant},$$

and this leads to a functional relation between $h$ and $g$ of the
form
$$ h = \alpha g^2 - K g^2 \int g^{-3} \, dt, \quad \alpha \equiv \text{constant}.$$

The corresponding exact solution of ~\eqref{eq:zk} is thus
\begin{equation}\label{eq:sol12} u= \frac{w(t) - g^2 (2 x g'
+ y^2 g'') (\alpha - K \int g^{-3} d t)}{2 g^3 (\alpha - K \int
g^{-3} d t)}
\end{equation}

where $w$ is an arbitrary function of time. For $K=0,$ it reduces to
\begin{equation}
\label{eq:sol12b}
 u= \frac{w(t) - \alpha g^2 (2 x g' + y^2 g'')}{2 \alpha g^3}.
\end{equation}

\begin{flushleft} {\bf Reduction by} $\set{\mathbf{v}_0, \mathbf{z}_f}$ \end{flushleft}
This is a particular case of type $\mathcal{L}_{2,1}$ Lie algebras.
The equivalent system of total differential equations takes in this
case the form
\begin{align*}
dy \; &= \; \frac{ 6 x y f' - y^3 f''}{12 x f}\, dt + \frac{y}{2x}\, dx \\
du \; &= \; \frac{1}{12 xf}\left[- 12 u x f' - 4 x^2 f'' - 2u y^2
f'' - 2 x y^2 f^{(3)}\right] \, dt + \frac{u}{x} \, dx.
\end{align*}
This leads to the reduction formula
\begin{align*}
u \; &= \; \frac{y^2}{ 6 f^2} \left[ w(z) + \frac{2}{3} f'^2 -
\left(\frac{2x}{y^2} f' + f''\right) f \right]\\
z \; &= \; \frac{f x}{y^2} - \frac{f'}{6},
\end{align*}
and the reduced equation is
\begin{equation} \label{eq:red03} 12 w - 12 z w'
+ w'^2 + (w + 24 z^2) w''=0.
\end{equation}
The solution for $w=0$ is
\begin{equation} \label{eq:sol03b} u =
\frac{y^2}{6 f^2}\left[ \frac{2}{3} f'^2 - \left( \frac{2 x}{y^2} f'
+ f'' \right) f \right].
\end{equation}

\begin{flushleft} {\bf Reduction by} $\set{\mathbf{x}_g, \mathbf{z}_f}$ \end{flushleft}
This Lie algebra belongs to type $\mathcal{L}_{2,10}.$ The
equivalent system is
\begin{align*}
dy \; &= \; \frac{2 y f'}{3 f} \, dt \\
du \; &= \; \frac{1}{6 f g} \left[-4 u g f' + 2 x f' g' - 2 x g f''
+ y^2 g' f'' - y^2 g f^{(3)}\right]\, dt - \frac{g'}{g} \, dx.
\end{align*}
The integrability condition is
\begin{align*}\label{eq:comp13}
  \quad &\quad  - 3 f g'^2 - g^2 f'' + 3 g (f' g' + f g'')=0.
\end{align*}
Integrating this last equation once yields
$$ 3 f \frac{g'}{g} - f' = K \equiv \text{constant}.  $$
We must therefore have
$$  g^3 = f \exp \left(\int \frac{K}{f} \, dt \right)  $$
as we already indicated earlier in ~\eqref{eq:comp13}

The reduction formula is
\begin{align*}
u\; &= \; \frac{1}{f^{2/3}} \left[  w(z) - \frac{x (K + f')}{3
f^{1/3}}
 - \frac{y^2}{6 f^{4/3}} \left( - \frac{K}{3} f' - \frac{2}{3} f'^2 + f f''
 \right)\right]\\
z \; &= \; y^3/ f^2,
\end{align*}

and the reduced equation is the linear equation
\begin{equation} \label{eq:red13} 9 z^{4/3} w'' +
6 z^{1/3} w' + \frac{K^2}{9}=0,
\end{equation}

with solution
\begin{equation*}
w(z)= -\frac{1}{18} K^2 z^{2/3} + 3z^{1/3} a_1 + a_2
\end{equation*}
 for some constants of integration $a_1$ and $a_2.$ Substituting this
 expression for $w(z)$ in the reduction formula yields
\begin{equation}
\label{eq:sol13}
\begin{split}
u &= \frac{1}{18 f^2}\left[  \left(18 a_2 + 54 a_1 (y^3/ f^2)^{2/3}
-
K^2 (y^3/f^2)^{2/3} \right) f^{4/3} \right]\\
  &+\frac{1}{18 f^2}\left( y^2 f' (K+ 2 f') - 3 f (2 K x
+ 2 x f'+ y^2 f'')\right).
\end{split}
\end{equation}

\begin{flushleft} {\bf Reduction by} $\set{\mathbf{y}_h, \mathbf{z}_f}$ \end{flushleft}
This Lie algebra is of type $\mathcal{L}_{2,12}.$ The equivalent
system is
\begin{align*}
dy \; &= \; \left[  \frac{2 y f'}{3 f} - \frac{h (2 x f' + y^2
f'')}{3 y f
h'}\right] dt + \frac{2 h}{y h'} \,dx \\
du \; &= \; \left[\frac{h''(2x f'+ y^2 f'')}{6 f h'} + \frac{(- 4 u
f' - 2 x f'' - y^2 f^{(3)})}{6 f} \right] \, dt - \frac{h''}{h'} \,
dx
\end{align*}
and the integrability condition is given by
\begin{equation}
\label{eq:comp23e} 3 f h'^2 + 2 h^2 f'' - 3h (f'h' + f h'')=0.
\end{equation}
Integrating this equation once gives
$$ \frac{2 f'-K}{3 f}= \frac{h'}{h},$$
where $K$ is a constant of integration. Thus we must have
\begin{equation}
\label{eq:comp23} h = C \left[  f^2 \exp \left(- \int \frac{K}{f} dt
\right) \right]^{1/3}.
\end{equation}
The reduction formula is
\begin{equation}\label{eq:fmlHF}
\begin{split}
u\; &= \; \frac{1}{f^{2/3}} \left[ w(z) - \frac{y^2 f'(K- 2 f')}{18
f^{4/3}} - \frac{- 6 K x + 6 x f' + 3 y^2 f''}{18 f^{1/3}} \right]\\
z \; &= \; \frac{12 x f + y^2 (K- 2 f')}{f^{4/3}}.
\end{split}
\end{equation}
These yield the reduced equation
\begin{equation} \label{eq:red23} K^2 + 90 K w' +
1296 w'^2 + 36 (36 w + K z)w''=0.
\end{equation}

For $K=0,$ this equation reduces to
\begin{equation} \label{eq:red230}
w'^2 + w w'' = 0
\end{equation}
with solution
\begin{equation} \label{eq:solr230} w= \beta \sqrt{2 z - \alpha}, \qquad
(\alpha \equiv
 \text{constant} , \quad \beta  \equiv \text{constant}).
\end{equation}

The corresponding solution of ~\eqref{eq:zk} is

\begin{equation} \label{eq:sol230} u= \frac{1}{18 f^2} \left[ 2 y^2 f'^2 - 3 f(2 x
f' + y^2 f'')+ 18 \beta f^{4/3} (4\frac{6 x f - y^2 f'}{f^{4/3}} -
\alpha)^{1/2} \right].
\end{equation}
All the exact solutions to Equation ~\eqref{eq:zk} that we have
found depend typically on zero arbitrary functions of time, and on
at most
 two such functions and some arbitrary constants.
Whenever the solution to a reduced equation was known, we were
always able to readily obtain the solution to the original equation
by a mere substitution into a reduction formula of the form
~\eqref{eq:u}. This situation is very different in the case of the
direct method of Clarkson and Kruskal that we investigate in more
details in the next section. Clearly, not all the group-invariant
solutions are physically relevant for the sound wave propagation
problem. However, they are certainly relevant for other problems
modeled by the same differential equations when the boundary
conditions change.
\section{Comparison with Clarkson and Kruskal's
method}\label{s:comparison}
 By the direct reduction method of
Clarkson and Kruskal \cite{clarkson}, all similarity solutions of
the  ZK equation of the form
\begin{equation} \label{eq:genformu}
u(t,x,y) = U(t,x,y, w(z)), \qquad z=z(t,x,y)
\end{equation}
where $U$ is a function of the indicated variables, and $w(z)$
satisfies an {\sc ode}, may be found by substituting Equation
~\eqref{eq:genformu} into Equation ~\eqref{eq:zk}. It is also argued
in \cite{zhu} that by a result of  Clarkson and Kruskal, in order to
find all such solutions, it is sufficient to look for $u$ in the
form
\begin{equation} \label{eq:linformu}
u(t,x,y)= \alpha + \beta w (z)
\end{equation}
where $\alpha$ and $\beta$ are functions of $t,x,$ and $y.$ The
latter equality is precisely our Equation ~\eqref{eq:u} that gives
the general form of all possible similarity solutions that we have
found thus far, in terms of the similarity variables. By making use
of the reduced anzatz $z_x \neq 0,$ Zhang {\em et al.} \cite{zhu}
showed in this way that the most general {\sc ode} satisfied by
$w(z)$ has the form

\begin{equation} \label{eq:zhu2}
w'^2 + ww'' + (A z + B) w' + 2 A w= \frac{1}{3}(A z+ B)^2
\end{equation}
and raised the question of whether there is any connection between
the direct method and the Lie classical method applied in the
preceding section.\par

One such connection could be determined by a  way of mapping the
reduced equation obtained by one method, to that obtained by the
other method, taking into account that the direct method gives in
principle the most general equation. Such a correspondence is not
obvious because of the various changes of variables through which
Equation ~\eqref{eq:zhu} was obtained, except perhaps if equations
are replaced by their equivalence classes under equivalence
transformations. However, due to a well-known result asserting that
two equivalent differential equations have isomorphic symmetry
groups \cite{olv2}, these equivalences become easier to establish.
The symmetry algebra of each of the reduced {\sc ode}s found in
section ~\ref{sb:2sbgp} has dimension at least two, except for
Equation ~\eqref{eq:red03} whose symmetry algebra is generated by
$\mathbf{v}_1= z
\partial_z + 2 w \partial_w$ alone. For $A \neq 0,$ Equation
~\eqref{eq:zhu2} is mapped after the change of independent variable
$y= z + B/A$ to the equation

\begin{equation} \label{eq:zhu2b}
w'^2 + w w'' + (A y) w' + 2 A w -\frac{1}{3} (A y)^2=0
\end{equation}

which has the same symmetry algebra as Equation ~\eqref{eq:red03}.
The necessary condition for an equivalence between these two
equations is thus satisfied. However, we've found that no linear
fractional transformation of the form
$$
H= \frac{a_1 w+ a_2}{a_3 w + a_4 }, \qquad  \xi= a_5 x+ a_6,
$$
where $a_1, \dots, a_6$ are arbitrary constants with $a_1 a_4- a_2
a_3 \neq 0,$ maps Equation ~\eqref{eq:zhu2b} to Equation
~\eqref{eq:red03}. This does not necessarily precludes the two
equations from being equivalent under other types of
transformations.\par

   For $A=0,$ Equation ~\eqref{eq:zhu2} reduces to
\begin{equation} \label{eq:zhuA0}
-\frac{B^2}{3} + B w' + w'^2 + w w''=0
\end{equation}
and the corresponding symmetry algebra has generators $\mathbf{v}_1=
\partial_z, \; \mathbf{v}_2= z \partial_z + w \partial_w.$ This symmetry
algebra has the same dimension as that for the reduced Equation
~\eqref{eq:red23} from section ~\ref{sb:2sbgp}. Equation
~\eqref{eq:red23} with $K=0$ is exactly the same as Equation
~\eqref{eq:zhuA0} with $B=0.$ For $K \neq 0,$ the change of
variables
$$ v= K m (36  w+ k z), \quad y= - K z/36, \quad \text{ with } m=
\frac{-35 + 3 \sqrt{21}}{36288}
$$
reduces Equation ~\eqref{eq:red23} to an equation of the form
$$ -\frac{B^2}{3} + B v' + v'^2 + v v''=0 $$
with $B= (-9 + 5 \sqrt{21}) K /672.$ This shows the equivalence of
the equations ~\eqref{eq:zhuA0} and ~\eqref{eq:red23}. \par

We thus see that by using the symmetry properties of the reduced
equations, we can always map each of the symmetry-reduced equations
to a sub-equation of ~\eqref{eq:zhu2}. It should be noted that
equations obtained by the direct method
   of Clarkson and Kruskal tend to be broad in nature
and hence more difficult to solve. In addition, no
   symmetry or other properties of the solutions these equations
   are known, and there is also no guarantee that the solutions
   to the original equation is recoverable from those of the reduced
   equation, because the process of reverting back the solution
   generally involves solving other {\sc pde}s. When this process does
give rise to an explicit solution of the original equation, it
usually involves relatively complicated expressions. For example,
the full expression of the exact solution generated by the reduced
Equation ~\eqref{eq:red230} and given by ~\eqref{eq:solr230} is
easily found in Lie's approach and depends only on one arbitrary
function of time and two arbitrary constants. The same solution is
obtained by the direct method in \cite{zhu} only through the solving
a number of intermediary {\sc pde}s, and depends on no less than
four arbitrary functions of time plus the two arbitrary
constants.\par

   As for the reduction to $(1+1)$-dimensional equations, the similarity
solutions and corresponding reduced equations found in Section
~\ref{sb:1sbgp} are much simpler and complete than those obtained
using the direct method of Clarkson and Kruskal in \cite{zhu}, where
both the equations and corresponding solutions depend on up to four
arbitrary functions and are also  usually defined only implicitly in
terms of the solutions of some {\sc pde}s. For instance, if a
general reduced equation of the form ~\eqref{eq:redf} were to be
determined by the direct method, it would more likely be determined
only implicitly and be expressed in terms of unnecessary arbitrary
functions. Although the direct method also gives in principle the
similarity solution as well as the reduced equation, its algorithm
is relatively complicated  and yields most often complicated and
incomplete results. \par

In the classical Lie reduction approach, the symmetry properties of
each similarity solution is known and in some cases, especially in
the case of solvable symmetry algebras, the solution to the original
equation can be recovered by quadratures. \par

It should however also be noted that even with Lie's classical
method, having to revert back to the solution of the original
equation by quadratures alone does not guarantee the explicit
determination of this solution. Indeed, these quadratures may
involve transcendental or integral equations that aren't easy to
solve. This fact is common in the determination of group-invariant
solutions and we give another example here by attempting to solve
Equation ~\eqref{eq:red23}, for which the symmetry generators are
given by
   $$
   \mathbf{v}_1= (-36/k) \partial_z + \partial_w, \quad \text{ and
   }\quad
   \mathbf{v}_2= z \partial_z + w \partial_w.
   $$
These vector fields clearly generate a solvable Lie algebra with
commutation relations $[ \mathbf{v}_1,  \mathbf{v}_2]=
\mathbf{v}_1.$ In terms of the rectifying coordinates
\begin{equation} \label{eq:ry}
y= 36 w+ K z  \qquad \text{ and  }  r= -(K x)/ 36
\end{equation}
 for $ \mathbf{v}_1,$ Equation
~\eqref{eq:red23} reduces to

\begin{align}
- H + 18 H^2 + 648 H^3 + y  H'&=0  \label{eq:red23r1}\\[-7mm]
\intertext{ \vspace{-3mm} where } H &= \frac{d r}{d y}.
\label{eq:drdy}
\end{align}

Equation ~\eqref{eq:red23r1} retains the symmetry $\mathbf{v}_2,$
which is given by $\tilde{\mathbf{v}}_2= y \partial_y + r
\partial_r$ in terms of the variables $r$ and $y,$ and by
\begin{equation} \label{eq:red23v2}
\hat{\mathbf{v}}_2= \frac{H}{(1+ 18 H)^{1/3}(36 H-1)^ {2/3}}\,
\partial_y
\end{equation}
in terms of $H$ and $y.$ In terms of the rectifying coordinates
\begin{align}
 \xi &= H  \\
S &= \frac{(1+ 18 H)^{1/3}(36 H-1)^ {2/3}}{H} y \label{eq:S}
\end{align}
for $\hat{\mathbf{v}}_2,$  Equation ~\eqref{eq:red23r1} reduces to
$$  \xi^3 (1+ 18 \xi)^{5/3}(36 \xi -1)^{4/3} S'=0$$
with solution $S(\xi)= a,$ where $a$ is a constant of integration.
It thus follows from Equation ~\eqref{eq:S} that the solution $H=
F(y)$ of ~\eqref{eq:red23r1} is given implicitly by the polynomial
equation
\begin{equation} \label{eq:poly3}
a H^3 - y^3 + 54 H y^3 - 23328 H^3 y^3=0.
\end{equation}
Assuming that the function $F(y)$ is known, it readily follows from
Equation ~\eqref{eq:drdy} that

\begin{equation} \label{eq:rint}
r= \int F(y)dy \; = \; G(y)
\end{equation}
for a certain function $G(y),$  and the substitution of
~\eqref{eq:ry} and ~\eqref{eq:fmlHF} into this last equation leads
to the solution to the original equation by an additional
quadrature. The problem here with these quadratures is that the
integral $r= \int F(y)dy$ is not easy to solve because of the
complicated form of the function $F.$ Indeed, one of the simplest
roots $H=F(y)$ of the polynomial Equation ~\eqref{eq:poly3} is given
by
$$  F(y)= \frac{-36 a y^3 + 839808 y^6 + 2^{1/3}(y^3 X^2 + \sqrt{a y^6 X^3})^{2/3}   }
{2^{2/3} X (y^3 X^2 + \sqrt{a y^6 X^3})^{1/3}}$$
 where $X= a- 23328 y^3,$ and so the integral equation $r= \int F(y)
 dy$ does not appear to be obvious to solve, unless perhaps an
 appropriate change of variable to be found was performed. In
 reality, the difficulties with these quadratures are normally to be
 associated with the equation itself and not with Lie's reduction
 method.

\section{Concluding remarks}
In this paper we have given a classification of low-dimensional
subalgebras of the ZK symmetry algebra into conjugacy classes and
determined  similarity reductions of the ZK equation to
$(1+1)$-dimensional equations and to {\sc ode}s. We have thus
derived a number of new exact solutions to this equation in this
way. Consequently, we have been able to compare the results obtained
using Lie's classical method with those obtained in \cite{zhu} with
the direct method of Clarkson and Kruskal, and found in particular
equivalence transformations between the reduced equations obtained
by this two methods. This investigation shows that not only Lie's
algorithm is simpler and much richer in properties, but it also
yields simpler and more complete results as opposed to the direct
method where both solutions and reduced equations are most often
determined only implicitly by complicated expressions.\par
The example of  reduction by direct case analysis treated in Section
~\ref{ss:byanal}   suggests that equations obtained by the direct
method can also be viewed as reduced equations obtained by the Lie
classical method by performing a reduction with a system of generic
generators corresponding to Lie algebras having orbits of fixed
dimensions, rather than performing the reduction with some specific
generators.
\vspace{3mm}

\end{document}